\newtheorem{lem}{Lemma}[section]
\newtheorem{thm}[lem]{Theorem}
\newtheorem{exa}[lem]{Example}
\numberwithin{equation}{section}
\newcommand{\ZZ}{{\mathbb{Z}}}
\newcommand{\RR}{{\mathbb{R}}}
\newcommand{\A}{{\mathcal{A}}}
\newcommand{\B}{{\mathcal{B}}}
\newcommand{\DDD}{{\mathcal{D}}}
\newcommand{\comp}{{\mathsf{comp}}}
\newcommand{\traj}{{\mathsf{traj}}}
\newcommand{\area}{{\textsf{area}}}
\newcommand{\N}{\mathsf{N}}
\newcommand{\E}{\mathsf{E}}
\newcommand{\U}{\mathsf{U}}
\newcommand{\HH}{\mathsf{H}}
\newcommand{\DD}{{\mathsf{D}}}
\newcommand{\G}{{\mathcal{G}}}
\newcommand{\F}{{\mathcal{F}}}
\newcommand{\M}{{\mathcal{M}}}
\begin{document}

\title[]{Meanders and Dyck-Path Billiards}\thanks{This research was supported in part by
National Science and Technology Council, Taiwan through grants 113-2115-M-003-010 (S.-P. Eu), 113-2115-M-153-002 (T.-S. Fu) and 112-2115-M-032-002 (H.-C. Hsu).}
\author[S.-P. Eu]{Sen-Peng Eu}
\address{Department of Mathematics, National Taiwan Normal University, Taipei 116325, and Chinese Air Force Academy, Kaohsiung 820009, Taiwan, ROC}
\email{speu@math.ntnu.edu.tw}

\author[T.-S. Fu]{Tung-Shan Fu}
\address{Department of Applied Mathematics, National Pingtung University, Pingtung 900391, Taiwan, ROC}
\email{tsfu@mail.nptu.edu.tw}

\author[H.-C. Hsu]{Hsiang-Chun Hsu}\address{Department of Mathematics, Tamkang University, New Taipei City 25137, Taiwan, ROC} \email{hchsu0222@gmail.com}

\begin{abstract}
We study a statistic $\traj$ on the ordered pairs $(P,Q)$ of Dyck paths of size $n$, which counts the number of billiard trajectories in the grid polygon enclosed by $P$ and $-Q$, where $-Q$ is the path obtained by reflecting $Q$ over the ground line.
In terms of grid polygon, we establish an involution on the set of such ordered pairs $(P,Q)$ which either increases or decreases $\traj(P,Q)$ by 1. This proves a result by Di Francesco--Golinelli--Guitter that the numbers of semimeanders (meanders, respectively) of order $n$ with even and odd numbers of components are equal if $n$ is even and differ by a Catalan number (the square of a Catalan number, respectively) if $n$ is odd. Some results about $(-1)$-evaluation of the generating functions for the statistic $\traj$ on restricted sets of Dyck paths are also presented.
\end{abstract}
\subjclass{05A19}
\keywords{meander, Dyck path, trajectory, bijection}
\maketitle
\section{Introduction}
\subsection{Trajectory statistic for Dyck paths} 
A \emph{Dyck path} of size $n$ is a lattice path in the plane $\ZZ^2$ from $(0,0)$ to $(n,n)$ using \emph{north steps} $\N=(0,1)$ and \emph{east steps} $\E=(1,0)$ which never goes below the line $y=x$. Sometimes, we use the notation $\N^k$ ($\E^k$, respectively) for $k$ consecutive north steps (east steps, respectively). Let $\DDD_n$ denote the set of Dyck paths of size $n$. It is known that $|\DDD_{n}|=c_n=\frac{1}{n+1}\binom{2n}{n}$, the $n$th Catalan number. For any $P\in\DDD_n$, let $-P$ denote the lattice path obtained by reflecting the path $P$ over the line $y=x$, called a \emph{negative Dyck path}.

Let $P,Q\in\DDD_n$. We associate the ordered pair $(P,Q)$ with a \emph{grid polygon} in the plane $\ZZ^2$ enclosed by $P$ and $-Q$. The boundary of the grid polygon consists of the steps of $P$ and $-Q$, which are numbered $1, 2,\dots, 4n$ clockwise. For example, the grid polygon associated to $(P,Q)$, where $P=\N\N\E\N\N\E\N\E\E\E$ and $Q=\N\N\E\E\N\E\N\N\E\E$, is shown in Figure \ref{fig:meander-polygon}.

Motivated by the study of rainbow meanders and Cartesian billiards by Fiedler--Casta\~{n}eda \cite{FC} (see also \cite[Section~2.7]{KL}), we consider billiard trajectories, which are also interpreted as trajectories of light beams, in the grid polygon $(P,Q)$.  Pick some step $p$ on the boundary of the grid polygon, say with label $i$. Emit a  beam of light from the midpoint of $p$ into the interior of the polygon so that the light beam forms a 45$^\circ$ angle with $p$ and travels either northeast, southeast, southwest, or northwest (depending on the orientation of $p$). The light beam will travel through the interior of the polygon until reaching the midpoint of another step, with label $\pi(i)$, meeting it at a 45$^\circ$ angle. This defines a permutation $\pi: [4n]\rightarrow [4n]$.    As shown in Figure \ref{fig:meander-polygon}, the cycle decomposition of $\pi$ of that polygon is $\pi=(1,10,11,14,7,8,13,12,9,4,17,20)(2,3,18,19)(5,6,15,16)$. Each cycle of $\pi$ corresponds to a \emph{trajectory} of a beam of light (cf. \cite{FC, KL}).

\begin{figure}[ht]
\begin{center}
\includegraphics[width=2in]{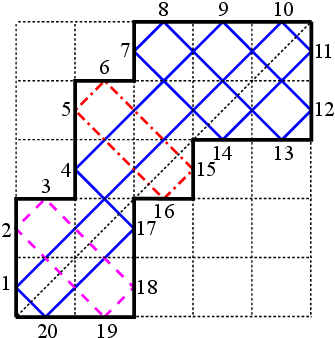}
\end{center}
\caption{\small A grid polygon with three trajectories of light beams.} \label{fig:meander-polygon}
\end{figure}

Let $\traj(P,Q)$ denote the number of trajectories of light beams in the grid polygon associated to $(P,Q)$. The initial values for the distribution of the ordered pairs $(P,Q)\in \DDD_n\times\DDD_n$ with respect to $\traj(P,Q)$ are shown in Table \ref{tab:meander-component-distribution}.

\begin{table}[ht]
\caption{\small The number of ordered pairs $(P,Q)\in\DDD_n\times\DDD_n$ with $k$ trajectories of light beams for $1\le k\le n\le 7$.}
\centering
\begin{tabular}{c|rrrrrrr}
 $n\backslash k$   & 1 &  2   &  3 & 4 & 5 & 6 & 7   \\
\hline
1 &   1 &     &      &       &       &       &   \\
2 &   2 &   2 &      &       &       &       &   \\
3 &   8 &  12 &   5  &       &       &       &   \\
4 &  42 &  84 &  56  &  14   &       &       &   \\
5 & 262 & 640 & 580  & 240   &   42  &       &   \\
6 & 1828 & 5236 & 5894 & 3344 & 990  & 132   &   \\
7 & 13820 & 45164 & 60312 & 42840 & 17472 & 4004 &  429  
\end{tabular}
\label{tab:meander-component-distribution}
\end{table}

In particular, we define the statistic $\traj$ on Dyck paths as follows. Let $L_n$ denote the Dyck path $\N^n\E^n$. For any $P\in\DDD_n$, let $\traj(P)=\traj(P,L_n)$ for short. For $n=3$, the values of $\traj(P)$ for the five Dyck paths $P$ of size 3 shown in Figure  \ref{fig:Dyck-path-size-3} are 3, 2, 1, 1, and 2 accordingly. 
The initial values for the distribution of Dyck paths with respect to the statistic $\traj$ are shown in Table \ref{tab:semimeander-component-distribution}.

\begin{figure}[ht]
\begin{center}
\includegraphics[width=5.6in]{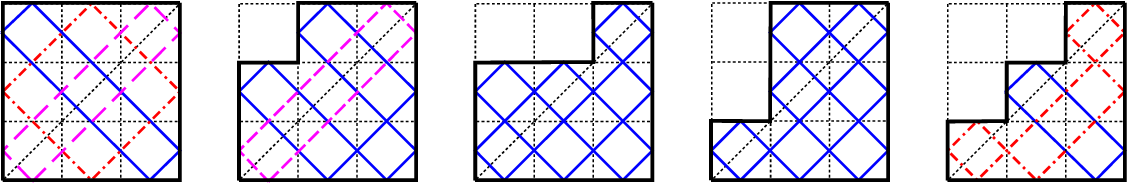}
\end{center}
\caption{\small The trajectories of light beams for the  paths in $\DDD_3$.} \label{fig:Dyck-path-size-3}
\end{figure}

\begin{table}[ht]
\caption{\small The number of Dyck path $P\in\DDD_n$ with $\traj(P)=k$ for $1\le k\le n\le 7$.}
\centering
\begin{tabular}{c|rrrrrrr}
 $n\backslash k$   & 1 &  2   &  3 & 4 & 5 & 6 & 7   \\
\hline
1 &   1 &     &      &      &       &       &   \\
2 &   1 &   1 &      &      &       &       &   \\
3 &   2 &   2 &   1  &      &       &       &   \\
4 &   4 &   6 &   3  &  1   &       &       &   \\
5 &  10 &  16 &  11  &  4   &   1   &       &   \\
6 &  24 &  48 &  37  & 17   &   5   &   1   &   \\
7 &  66 & 140 & 126  & 66   &  24   &   6   &  1  
\end{tabular}
\label{tab:semimeander-component-distribution}
\end{table}

\subsection{Meanders/Semimeanders} Fix a straight line in the plane $\RR^2$ and $2n$ points on it. A \emph{meander} of order $n$ is a collection of closed self-avoiding and noncrossing curves that intersect the line in exactly those $2n$ points. Two configurations are considered as equivalent if they are smooth deformations of one another. We consider only the inequivalent meanders. For example, the four meanders of order 2 are shown in Figure \ref{fig:meander-order-2}. Let $\M_n$ denote the set of meanders of order $n$. Let $\comp(T)$ denote the number of components of a meander $T\in\M_n$. Note that the triangular array in Table \ref{tab:meander-component-distribution} coincides with the distribution of the meanders in $\M_n$ with respect to the number of components \cite[A008828]{oeis}; see also \cite{DiFGG}. We refer the readers to \cite{Zvonkin} for historical backgrounds and recent developments of the enumeration of meanders.

\begin{figure}[ht]
\begin{center}
\includegraphics[width=4in]{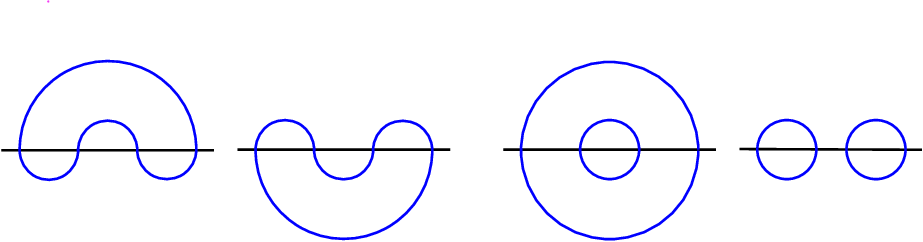}
\end{center}
\caption{\small The four meanders of order 2.} \label{fig:meander-order-2}
\end{figure}

Now, fix a ray in the plane $\RR^2$ and $n$ points on it. A \emph{semimeander} of order $n$ is a collection of closed self-avoiding and noncrossing curves that intersect the ray in exactly those $n$ points. 
For example, the five semimeanders of order 3 are shown in Figure \ref{fig:semimeander-order-3}.
Let $\M^*_n$ denote the set of semimeanders of order $n$. Note that the triangular array in Table \ref{tab:semimeander-component-distribution} coincides with the distribution of the semimeanders in $\M^*_n$ with respect to the number of components \cite[A046726]{oeis}; see also \cite{DiFGG}. Recently, Cori and Hetyei \cite{CH} present a bijection between meanders (semimeanders, respectively) of order $n$ and specialized hypermaps, called the spanning hypertrees of the reciprocal of a dipole with $n$ parallel edges (a monopole with $n/2$ nested edges, respectively).

\begin{figure}[ht]
\begin{center}
\includegraphics[width=4.85in]{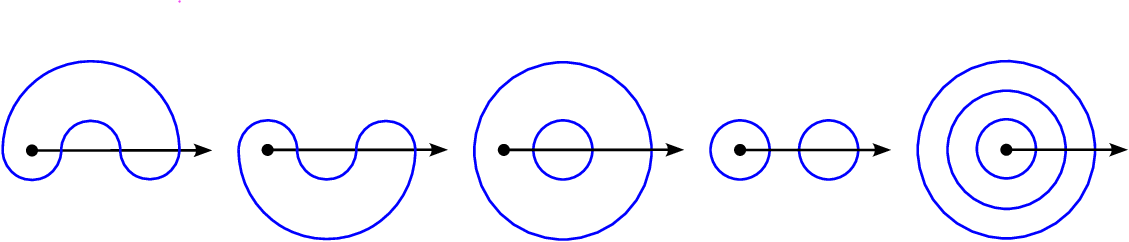}
\end{center}
\caption{\small The five semimeanders of order 3.} \label{fig:semimeander-order-3}
\end{figure}

\subsection{Main results}
There is a standard bijection between ordered pairs of Dyck paths and meanders; see \cite{DiFGG-NPB, LaCroix}.
Our first result is to describe the bijective properties in Theorem \ref{thm:Dyck-path-to-semimeander}

\begin{thm} \label{thm:Dyck-path-to-semimeander} The following results hold.
\begin{enumerate}
\item There is a bijection $(P,Q)\mapsto T$ of $\DDD_n\times\DDD_n$ onto $\M_n$ with $\traj(P,Q)=\comp(T)$.
\item There is a bijection $P\mapsto S$ of $\DDD_n$ onto $\M^*_n$ with $\traj(P)=\comp(S)$.
\end{enumerate}
\end{thm}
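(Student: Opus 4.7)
My plan is to prove both parts by establishing the classical arch-diagram bijection between ordered pairs of Dyck paths and meanders, and then identifying billiard trajectories with meander components via a geometric analysis of the diagonals inside the grid polygon. For (i), I associate to each $P$ its standard noncrossing matching $m_P$ on $[2n]$ that pairs each north step with its closing east step, and similarly $m_Q$; placing $2n$ points on a horizontal line with the arcs of $m_P$ above and those of $m_Q$ below yields a meander $T=\Phi(P,Q)$, with inverse reading $(m_P,m_Q)$ off the upper and lower arch systems. For (ii), specializing $Q=L_n$ gives the fully nested lower matching $\{\{i,2n+1-i\}:1\le i\le n\}$; folding the line by identifying $i\leftrightarrow 2n+1-i$ turns the meander into a semimeander on $n$ points of a ray whose arcs come solely from $P$.

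To match trajectories with components, I will label the $4n$ boundary midpoints of the polygon as half-arch endpoints of the meander: boundary step $i$ with $i\le 2n$ corresponds to the upper endpoint at line point $i$, and boundary step $j$ with $j>2n$ corresponds to the lower endpoint at line point $4n+1-j$. The key geometric dictionary is two-fold: (a) every step of $P$ and of $-Q$ increases $x+y$ by one, so the polygon is anti-diagonally convex and each line $x+y=i-\tfrac12$ meets its interior in a single open segment joining the midpoints of boundary steps $i$ and $4n+1-i$, namely the $P$-step and $Q$-step at the same line position; and (b) for each integer $c\ge 0$, the line $y-x=c+\tfrac12$ passes through exactly the midpoints of $P$-steps transitioning between heights $c$ and $c+1$, with $\N$ steps acting as entries into the polygon and $\E$ steps as exits, and each entry-exit segment connects a pair matched in $m_P$. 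The symmetric statement holds for $y-x<0$ using $-Q$ and $m_Q$.

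With this dictionary, I define the incidence graph $G$ whose vertices are the $4n$ midpoints and whose edges are the diagonal interior segments: $G$ is $2$-regular (each midpoint is incident to exactly one NE/SW segment coming from an arc of $m_P$ or $m_Q$, and exactly one NW/SE segment coming from a line-position crossing), and its cycles are precisely the billiard trajectories. Under the labeling above, $G$ is isomorphic to the endpoint graph of the meander $T$, whose vertices are the $4n$ half-arch endpoints and whose edges connect either the two endpoints of a common arc or the two endpoints at a common line point; the cycles of this endpoint graph are in bijection with the connected components of $T$. This yields $\traj(P,Q)=\comp(T)$, and restricting the same identification to $Q=L_n$ gives (ii) after the folding.

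The main obstacle is proving dictionary item (b) rigorously for arbitrary Dyck paths, i.e., showing that the entry-exit alternation of $\N$- and $\E$-step midpoints on each line $y-x=c+\tfrac12$ realizes the matching $m_P$ at that level. I plan to verify this by induction using the first-return decomposition $P=\N P_1\E P_2$ (with $P_1$ and $P_2$ smaller Dyck paths), which recursively reduces the matching at each level to those of the smaller subpaths.
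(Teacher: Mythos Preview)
Your proposal is correct and essentially matches the paper's proof: both build the meander from the matching pairs of $P$ (upper arches) and $Q$ (lower arches), and both identify trajectories with components by recognizing that the slope-$+1$ billiard segments join matched pairs while the slope-$-1$ segments join step $i$ on $P$ to step $4n+1-i$ on $-Q$. The paper expresses this last identification via auxiliary diagonal points $z_k=\bigl(\tfrac{2k-1}{4},\tfrac{2k-1}{4}\bigr)$ through which each anti-diagonal segment passes, describing a trajectory as a concatenation of pieces $z_i\to p_i\to p_j\to z_j$ deforming the meander arcs, whereas you phrase the same fact as an isomorphism of $2$-regular graphs; the content is identical.
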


Our second result is to give a combinatorial proof of the  $(-1)$-evaluations of the generating functions for the statistic $\traj$ on Dyck paths and ordered pairs of Dyck paths.

\begin{thm} \label{thm:traj-balance} The following results hold.
\begin{enumerate}
\item There is an involution $\varphi:P\mapsto P'$ on $\DDD_n$ satisfying  $\traj(P)-\traj(P')\in\{1,0,-1\}$ and resulting in the following identity
\begin{equation} 
\sum_{P\in\DDD_n} (-1)^{\traj(P)} = 
\begin{cases} 
0 & \mbox{if $n$ is even,} \\
-c_k & \mbox{if $n=2k+1$.}
\end{cases}
\end{equation}
\item There is an involution $\psi:(P,Q)\mapsto (P',Q')$ on $\DDD_n\times\DDD_n$ satisfying  $\traj(P,Q)-\traj(P',Q')\in\{1,0,-1\}$ and resulting in the following identity
\begin{equation} 
\sum_{(P,Q)\in\DDD_n\times\DDD_n} (-1)^{\traj(P,Q)} = 
\begin{cases} 
0 & \mbox{if $n$ is even,} \\
-c_k^2 & \mbox{if $n=2k+1$.}
\end{cases}
\end{equation}
\end{enumerate}
\end{thm}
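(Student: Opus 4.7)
My plan is to construct $\varphi$ and $\psi$ as sign-reversing involutions whose fixed-point sets are empty for even $n$ and, for $n=2k+1$, consist of $c_k$ Dyck paths (respectively $c_k^2$ ordered pairs), all carrying an \emph{odd} value of $\traj$. The $\{1,0,-1\}$ clause in the statement then reflects that $\traj$ changes by $\pm1$ on every nontrivial orbit and by $0$ precisely at fixed points, so the identities drop out of the cancellation
\[
\sum_{P\in\DDD_n}(-1)^{\traj(P)} \;=\; \sum_{\varphi(P)=P}(-1)^{\traj(P)} \;=\; -\bigl|\{P\in\DDD_n:\varphi(P)=P\}\bigr|,
\]
and likewise for pairs.

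I would work on the grid polygon rather than directly on the Dyck word, since the billiard picture makes the change in $\traj$ transparent. The scan picks out a canonical local feature of the polygon---my first attempt would be \emph{the first lattice corner} along the boundary of $P$, read starting from $(0,0)$, that can be legally toggled---and performs a local swap of one $\N$ with one adjacent $\E$. Geometrically this inserts or removes a single unit square at that corner. Because only the handful of boundary midpoints adjacent to the toggled corner are directly affected, the rewiring of the permutation $\pi$ is local: a case check on the two or three trajectories meeting the corner shows that either two cycles of $\pi$ merge into one, or one cycle splits into two, changing $\traj$ by exactly $\pm 1$. The fixed points of $\varphi$ should arise as the image of an injection $\DDD_k\hookrightarrow\DDD_{2k+1}$ obtained by stripping a canonical bump from $P$; verifying that every such fixed point has odd $\traj$ is what pins down the correct choice of bump, and is also the place where I would check that the trajectory-count parity is genuinely controlled by the path structure.

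For part (ii), the same toggle template runs on the grid polygon of $(P,Q)$, scanning the upper boundary $P$ and the lower boundary $-Q$ in tandem for the first togglable corner, with a tie-breaking rule between the two sides. The expected fixed-point count $c_k^2=|\DDD_k\times\DDD_k|$ is consistent with stripping one canonical bump from $P$ and one from $-Q$ independently, which also explains why the bijective content of Theorem \ref{thm:Dyck-path-to-semimeander} makes this the natural product extension of part~(i). The principal obstacle I foresee is verifying that $\varphi$ is genuinely an involution: a local toggle can shift the location of subsequent features, so ``the first togglable corner'' of $\varphi(P)$ need not coincide with the corner that was toggled to obtain $\varphi(P)$ from $P$. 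The cleanest fix is to characterize togglability by a global invariant---say the signed height of the corner relative to a reference path---so that the toggled corner is automatically the first togglable corner of the image, rendering the involutive property manifest.
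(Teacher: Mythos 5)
Your architecture is the same as the paper's: a sign-reversing involution that toggles the first ``bad'' corner of $P$ (with the prioritize-$P$ tie-break for pairs), whose fixed points form a set of size $0$ or $c_k$ in bijection with $\DDD_k$, all with odd $\traj$. But the proposal defers exactly the three steps that constitute the proof. First, the togglability criterion is never pinned down; the paper's choice is that a peak is bad iff its $y$-coordinate is even and a valley is bad iff its $x$-coordinate is odd. This is precisely the ``global invariant'' you ask for at the end: toggling the first bad peak, at even $(x,y)$, yields a valley at $(x+1,y-1)$ with both coordinates odd, hence again the first bad corner of the image, so the involution property is immediate and your foreseen obstacle dissolves. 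The fixed points are then the paths $\N^{2a_1+1}\E^{2b_1}\cdots\N^{2a_t}\E^{2b_t+1}$, which exist only for odd $n=2k+1$ and biject with $\DDD_k$ by halving the exponents --- a ``doubling'' correspondence rather than the bump-stripping you envisage, though either could be made to work.

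Second, and more seriously, your assertion that the local rewiring ``either merges two cycles or splits one, changing $\traj$ by exactly $\pm1$'' is where the actual difficulty sits: a priori a cycle of $\pi$ passing through all the affected midpoints could be reconnected to \emph{itself}, leaving the cycle count unchanged, and a sign-reversing involution cannot tolerate a zero change on a non-fixed orbit. The paper excludes this with a dedicated orientation lemma (Lemma \ref{lem:alternating-direction}): via a checkerboard coloring one shows that parallel light segments on adjacent diagonals travel in opposite directions, which forces the cyclic order of the eight relevant midpoints and leaves only the split and merge cases. Third, the odd parity of $\traj$ at fixed points is not verified; the paper proves it (Lemma \ref{lem:cyc-odd-number}) by connecting any fixed polygon to $(L_n,L_n)$ through sequences of four peak-to-valley toggles, each of which preserves the parity of $\traj$ by the $\pm1$ lemma, and noting $\traj(L_n,L_n)=n$ is odd. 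Until these three points are supplied, the proposal is a correct plan rather than a proof.
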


By Theorems \ref{thm:Dyck-path-to-semimeander} and \ref{thm:traj-balance}, we prove the following result \cite[Eq.\,(6.1)]{DiFGG} anew.

\begin{thm}[Di Francesco--Golinelli--Guitter] \label{thm:comp-balance} We have
\begin{enumerate}
\item ${\displaystyle
\sum_{S\in\M^*_n} (-1)^{\comp(S)} = 
\begin{cases} 
0 & \mbox{if $n$ is even} \\
-c_k & \mbox{if $n=2k+1$,}
\end{cases}
}$
\item ${\displaystyle
\sum_{T\in\M_n} (-1)^{\comp(T)} = 
\begin{cases} 
0 & \mbox{if $n$ is even} \\
-c_k^2 & \mbox{if $n=2k+1$.}
\end{cases}
}$
\end{enumerate}
\end{thm}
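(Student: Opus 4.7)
The plan is to obtain Theorem \ref{thm:comp-balance} as a direct corollary of Theorems \ref{thm:Dyck-path-to-semimeander} and \ref{thm:traj-balance}, by transporting the sign $(-1)^{\comp}$ across the bijections of Theorem \ref{thm:Dyck-path-to-semimeander} to reduce each sum to one handled by the involutions of Theorem \ref{thm:traj-balance}. No further combinatorial work on meanders or semimeanders is needed at this stage.

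For part (i), I would invoke Theorem \ref{thm:Dyck-path-to-semimeander}(ii), which supplies a bijection $P \mapsto S$ from $\DDD_n$ onto $\M_n^*$ with $\traj(P) = \comp(S)$. Consequently
\[
\sum_{S\in\M^*_n} (-1)^{\comp(S)} \;=\; \sum_{P\in\DDD_n} (-1)^{\traj(P)},
\]
and the right-hand side has been evaluated in Theorem \ref{thm:traj-balance}(i) to be $0$ when $n$ is even and $-c_k$ when $n = 2k+1$. Part (ii) is proved by the parallel reduction: Theorem \ref{thm:Dyck-path-to-semimeander}(i) gives a bijection $(P,Q) \mapsto T$ from $\DDD_n\times\DDD_n$ onto $\M_n$ with $\traj(P,Q) = \comp(T)$, whence
\[
\sum_{T\in\M_n} (-1)^{\comp(T)} \;=\; \sum_{(P,Q)\in\DDD_n\times\DDD_n} (-1)^{\traj(P,Q)},
\]
and Theorem \ref{thm:traj-balance}(ii) produces the claimed values $0$ or $-c_k^2$.

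Since the bijections preserve the relevant statistic on the nose, there is no sign subtlety to reconcile: the parity of $\comp$ on the meander side equals the parity of $\traj$ on the Dyck-path side term by term. Accordingly, the real combinatorial content lies entirely upstream, in the construction of the bijections of Theorem \ref{thm:Dyck-path-to-semimeander} (grid-polygon trajectories $\leftrightarrow$ meander components) and, more substantially, in the sign-reversing involutions $\varphi$ and $\psi$ of Theorem \ref{thm:traj-balance}, which force the cancellations and isolate the fixed-point contributions $-c_k$ and $-c_k^2$ in the odd case. Once those results are in hand, the deduction of Theorem \ref{thm:comp-balance} is the two-line transport above, with no additional obstacle to address.
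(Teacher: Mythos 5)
Your proposal is correct and matches the paper exactly: the paper also obtains Theorem \ref{thm:comp-balance} as an immediate corollary of Theorems \ref{thm:Dyck-path-to-semimeander} and \ref{thm:traj-balance}, transporting the statistic $\comp$ to $\traj$ via the stated bijections. No further comment is needed.
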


A \emph{Motzkin path} of length $n$ is a lattice path in the plane $\ZZ^2$ from $(0,0)$ to the point $(n,0)$ using \emph{up step} $(1,1)$, \emph{horizontal step} $(1,0)$ and \emph{down step} $(1,-1)$ which never goes below the $x$-axis. Motzkin paths are counted by the \emph{Motzkin numbers} $\{m_n\}_{n\ge 1}=\{1, 2, 4, 9, 21, 51, 127$, $323, \dots \}$  \cite[A001006]{oeis}. Those Motzkin paths with no horizontal step on the $x$-axis are called \emph{Riordan paths}, which are counted by the \emph{Riordan numbers} $\{r_n\}_{n\ge 1}=\{0, 1, 1, 3, 6, 15, 36, \dots\}$ \cite[A005043]{oeis}. For convenience, a Motzkin path or  Riordan path is said to be \emph{even} (\emph{odd}, respectively) if it contains an even (odd, respectively) number of up steps.

A \emph{peak} of a Dyck path is an occurrence of $\N\E$ (i.e., a north step followed by an east step) and a \emph{valley} is an occurrence of $\E\N$. The coordinates of a peak or valley is given by the intersection point of its $\N$ and $\E$. A peak is said to be at \emph{height} $h$ if the intersection point is on the line $y=x+h$. It is known that the Dyck paths of size $n$ with all peaks at odd (even, respectively) height are counted by the Motzkin number $m_{n-1}$ (Riordan number $r_{n}$, respectively) \cite{Callan}.
Our third result is about the $(-1)$-evaluation of the generating functions for the statistic $\traj$ on restricted sets of Dyck paths, which coincide up to a sign with the excess of the number of even Motzkin/Riordan paths over the odd ones (cf. \cite[A343773]{oeis}).

\newpage
\begin{thm} \label{thm:up-to-a-sign} The following results hold.
\begin{enumerate}
\item Let $\A_n\subset\DDD_n$ be the set of Dyck paths with all peaks at odd height. Then we have
\begin{equation} \label{eqn:A_n+1}
\sum_{P\in\A_{n+1}} (-1)^{\traj(P)}=(-1)^{\lfloor\frac{n+2}{2} \rfloor} (e_n-d_n),
\end{equation}
where $e_n$ ($d_n$, respectively) is the number of even (odd, respectively) Motzkin paths of length $n$.
\item Let $\B_n\subset\DDD_n$ be the set of Dyck paths with all peaks at even height. Then we have
\begin{equation} \label{eqn:B_n}
\sum_{P\in\B_n} (-1)^{\traj(P)}=(-1)^{\lfloor\frac{n+2}{2} \rfloor} (\hat{d}_n-\hat{e}_n),
\end{equation}
where $\hat{e}_n$ ($\hat{d}_n$, respectively) is the number of even (odd, respectively) Riordan paths of length $n$.
\end{enumerate}
\end{thm}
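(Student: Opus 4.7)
My plan is to establish both parts via a parity identity that relates $\traj(P)$ on a restricted Dyck path to the up-step count of an associated Motzkin or Riordan path, then sum the identity. For part (i), I will use Callan's bijection $\Phi_\A:\A_{n+1}\to\text{Motzkin paths of length }n$ (both sides having size $m_n$), and the analogous bijection $\Phi_\B:\B_n\to\text{Riordan paths of length }n$ for part (ii). Under these bijections the up-step count $u$ of the image can be read off from a standard pair-decomposition of the Dyck path. The right-hand sides of the theorem then decompose as
\[
e_n-d_n=\sum_{M}(-1)^{u(M)},\qquad \hat e_n-\hat d_n=\sum_{M}(-1)^{u(M)},
\]
where $u(M)$ is the number of up steps of $M$ and the sums range over all Motzkin, resp.\ Riordan, paths of length $n$.

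The core claim, to be proved by induction on $n$, is the parity identity
\[
(-1)^{\traj(P)}=(-1)^{\lfloor(n+2)/2\rfloor}(-1)^{u(\Phi_\A(P))}\qquad(P\in\A_{n+1}),
\]
together with the $\B_n$ analogue in which the overall sign becomes $(-1)^{\lfloor(n+2)/2\rfloor+1}$. A small-case check confirms both signs: for $P=\N\E\in\A_1$ one has $\traj(P)=1$, $u=0$, $\lfloor 2/2\rfloor=1$; and for $P=\N\N\E\E\in\B_2$ the polygon is a $2\times 2$ square with $\traj(P)=2$ while $u=1$, $\lfloor 4/2\rfloor+1=3$. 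Summing the identity over $P\in\A_{n+1}$, resp.\ $\B_n$, yields the theorem. For the induction, I will split on the leftmost feature of $\Phi_\A(P)$. The $\mathsf{H}$-case corresponds to removing a leading height-$1$ peak from $P$, which should change $\traj$ by an odd integer while leaving $u$ unchanged, thus preserving the identity after the shift of $\lfloor(n+2)/2\rfloor$. The $\mathsf{U}$-case corresponds to elevating a smaller path (inserting an outer $\N\cdots\E$ or an outer $\N\N\cdots\E\E$ frame); the change in $\traj$ must be shown to have the right parity to offset the $+1$ change in $u$.

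The main technical difficulty will be the parity-change lemmas for the induction steps, in particular the wrapping case: proving that elevating the polygon by an outer $\N\cdots\E$ (respectively $\N\N\cdots\E\E$) layer alters $\traj$ by an integer of a prescribed parity. The plan is to analyze the affected light beams via a reflection argument on the polygon boundary: beams that enter or exit through the newly added strip either pair up to form a new trajectory or merge with existing ones, and a short case analysis according to where each affected beam re-enters the inner region controls the parity of the net count change. The companion $\Phi_\B$ argument for part (ii) follows the same template with Riordan constraints in place of the Motzkin ones, relying on the fact that $\B_n$ forbids height-$1$ peaks so that the $\mathsf{H}$-reduction is replaced by a two-layer peeling. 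Combined with the base-case verifications above, this will complete the induction and establish both formulas.
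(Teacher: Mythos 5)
Your overall skeleton matches the paper's: both use the Callan/Tirrell bijections $\A_{n+1}\to$ Motzkin paths and $\B_n\to$ Riordan paths, reduce the theorem to a pointwise parity identity relating $\traj(P)$ to the up-step count of the image, and fix the global sign by an explicit computation on a reference path (your base cases agree with the paper's Lemmas \ref{lem:Z_n+1} and \ref{lem:K*_n}). The divergence, and the gap, is in how the parity identity is established. You propose an induction on the leftmost feature of the Motzkin/Riordan image, but the two cases you describe ($\HH$-prefix removal and wrapping the whole polygon in an outer frame) do not cover all paths: the leftmost $\U$ of a Motzkin path is in general matched by a $\DD$ in the interior, so the first-return factorization $\U M'\DD M''$ forces a third, concatenation-type case. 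That case is where the plan breaks down structurally: in the polygon for $(P,L_n)$ the lower boundary is the full-width rainbow $-L_n$, which sends the point $i$ to $2n+1-i$, so a billiard trajectory starting in the left factor routinely re-enters the right factor and back. Trajectories therefore do not localize to sub-polygons, and no decomposition of $\traj$ compatible with concatenation is available. Even in the wrapping case you only assert that ``a short case analysis \dots controls the parity of the net count change'' without carrying it out; this is precisely the hard content of the theorem, and deferring it leaves the proof incomplete.

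The paper sidesteps all of this with a transfer to the area statistic: Lemmas \ref{lem:peak-valley} and \ref{lem:valley-peak} (already proved for the involution of Theorem \ref{thm:traj-balance}) show that every single peak$\leftrightarrow$valley swap changes $\traj$ by exactly $\pm 1$, and such a swap also changes $\area$ by exactly $1$; since swaps connect any two Dyck paths of the same size, $\traj(P)-\traj(Z_{n+1})\equiv\area(P)-\area(Z_{n+1})\pmod 2$. The parity of the area is then read off locally from each step of $\phi_A(P)$ (up steps contribute odd area, horizontal and down steps even), which yields your parity identity with no induction on the polygon at all. If you want to salvage your route, the honest fix is to replace the structural induction by this swap/area argument; otherwise you must prove a genuinely nonlocal statement about how trajectories recombine under concatenation, which your reflection sketch does not address.
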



\section{Bijective properties}

\subsection{Arch configurations}

A \emph{noncrossing matching} of order $n$ is a noncrossing partition of the set $\{1,\dots, 2n\}$ into blocks of size two; see \cite{Simion}. For any meander $T\in\M_n$, the straight line divides the plane into two half-planes. If the $2n$ points on the straight line are numbered from 1 to $2n$, the part of $T$ in each half-plane determines a noncrossing matching of order $n$. Such a system of $n$ arches in the upper (lower, respectively) half-plane is called an \emph{arch configuration} of order $n$. For example, the meander shown in Figure \ref{fig:arch-configuration} is the superimposition of two arch configurations, the upper (lower, respectively) configuration of which is given by the noncrossing matching $\{(1,10),(2,5),(3,4),(6,9),(7,8)\}$ ($\{(1,4),(2,3),(5,6),(7,10),(8,9)\}$, respectively).

\begin{figure}[ht]
\begin{center}
\includegraphics[width=2.75in]{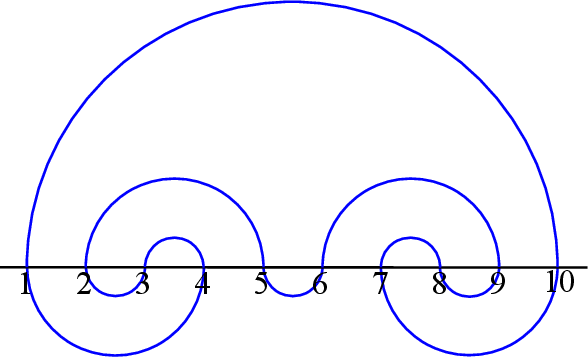}
\end{center}
\caption{\small A meander is the superimposition of two arch configurations.} \label{fig:arch-configuration}
\end{figure}  

Semimeanders of order $n$ can be viewed as the specialization of meanders of order $n$ to those with the prescribed noncrossing matching $\{(1,2n),(2n,2n-1),\dots,(n,n+1)\}$ as their lower arch configuration, which are also called \emph{rainbow meanders} \cite{DiFGG}. The straight line of the meanders is interpreted as a pair of rays originating from the center of the lower arch configuration. Fold the pair of rays into one, as indicated in Figure \ref{fig:semimeander-fold}, so that those $2n$ points on the straight line are identified by pairs according to the lower arch configuration. This provides a transformation to obtain the semimeander from a specialized meander; see \cite{DiFGG, LaCroix}.

\begin{figure}[ht]
\begin{center}
\includegraphics[width=4.2in]{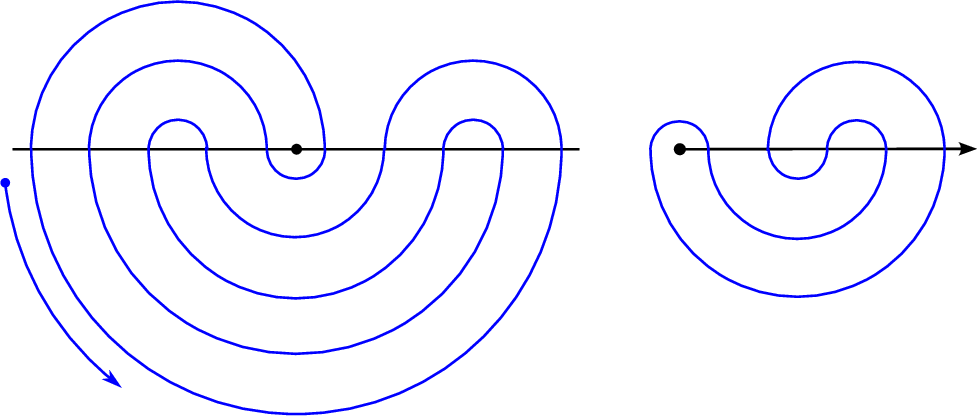}
\end{center}
\caption{\small A specialized meander is folded into a semimeander.} \label{fig:semimeander-fold}
\end{figure}

\subsection{Bijective properties}

A north or east step of a Dyck path $P$ is said to be at \emph{height} $h$ if it is between the two lines $y=x+h-1$ and $y=x+h$.  
We say that a north step $p$ and an east step $q$ form a \emph{matching pair} of $P$ if $p$ and $q$ are at the same height and face each other in the sense that the line connecting the midpoints of $p$ and $q$ stays under the path. We identify the matching pairs of $-P$ with the matching pairs of $P$.

In the following, we describe the bijection between $\DDD_n\times\DDD_n$ and $\M_n$ in terms of arch configuration.

\medskip
\noindent
\emph{Proof of Theorem \ref{thm:traj-balance}.}
Given an ordered pair $(P,Q)\in\DDD_n\times\DDD_n$, let $P=p_1p_2\cdots p_{2n}$ and let $Q=q_1q_2\cdots q_{2n}$. The corresponding meander $T\in\M_n$ is constructed as follows.
\begin{enumerate}
\item Draw a straight line in the plane $\RR^2$ and $2n$ points on it, numbered from 1 to $2n$.
\item  Find all the matching pairs of $P$. For each matching pair, say $(p_i,p_j)$, draw an arch from $i$ to $j$ above the points.
\item Find all the matching pairs of $Q$. For each matching pair, say $(q_i,q_j)$, draw an arch from $i$ to $j$ below the points.
\end{enumerate}

The inverse map can be established in reverse. To describe the billiard trajectories in the grid polygon, let $z_1,z_2,\dots, z_{2n}$ be the $2n$ points on the line $y=x$ in the grid polygon such that $z_k$ is the intersection of the two lines $y=x$ and $x+y=\frac{2k-1}{2}$, i.e., the coordinate of $z_k$ is $(\frac{2k-1}{4},\frac{2k-1}{4})$. 
\begin{enumerate}
\item For each upper arch of $T$, say from $i$ to $j$,  draw a line segment connecting $z_i$ to the midpoint of $p_i$, draw a line segment connecting the midpoints of $p_i$ and $p_j$, and then draw a line segment from the midpoint of $p_j$ to $z_j$.
\item For each lower arch of $T$, say from $i$ to $j$,  draw a line segment connecting $z_i$ to the midpoint of $q_i$, draw a line segment connecting the midpoints of $q_i$ and $q_j$, and then draw a line segment from the midpoint of $q_j$ to $z_j$.
\end{enumerate}
Thus, the billiard trajectories are deformations of the components of $T$. 
The result of Theorem \ref{thm:Dyck-path-to-semimeander}(i) follows.

To establish the bijection $\DDD_n\rightarrow\M^*_n$, for any $P\in\DDD_n$ we construct the meander corresponding to $(P,L_n)$, whose lower arch configuration is the noncrossing matching $\{(1,2n)$, $(2,2n-1),\dots,(n,n+1)\}$. 
The proof of Theorem \ref{thm:Dyck-path-to-semimeander} is completed.
\qed

\medskip
\begin{exa} {\rm
Let $P=\N\N\N\E\E\N\N\E\E\E$ and $Q=\N\N\E\E\N\E\N\N\E\E$. The grid polygon associated to $(P,Q)$ is shown in Figure \ref{fig:meander-2-upper}. Let $p_1,p_2,\dots, p_{10}$ denote the steps of $P$. The matching pairs of $P$ are $(p_1,p_{10}), (p_2,p_5), (p_3,p_4), (p_6,p_9)$ and $(p_7,p_8)$. This determines the upper arch configuration of the  the meander in Figure \ref{fig:arch-configuration}. Let $q_1,q_2,\dots,q_{10}$ denote the steps of $Q$. The matching pairs of $Q$ are $(q_1,q_4), (q_2,q_3), (q_5,q_6), (q_7,q_{10})$ and $(q_8,q_9)$. This determines the lower arch configuration of the  the meander in Figure \ref{fig:arch-configuration}. Note that the unique billiard trajectory shown in Figure \ref{fig:meander-2-upper} is a deformation of the meander in Figure \ref{fig:arch-configuration}.
}
\end{exa}

\begin{figure}[ht]
\begin{center}
\psfrag{1}[][][0.85]{$z_1$}
\psfrag{2}[][][0.85]{$z_2$}
\psfrag{3}[][][0.85]{$z_3$}
\psfrag{4}[][][0.85]{$z_4$}
\psfrag{5}[][][0.85]{$z_5$}
\psfrag{6}[][][0.85]{$z_6$}
\psfrag{7}[][][0.85]{$z_7$}
\psfrag{8}[][][0.85]{$z_8$}
\psfrag{9}[][][0.85]{$z_9$}
\psfrag{10}[][][0.85]{$z_{10}$}
\includegraphics[width=2in]{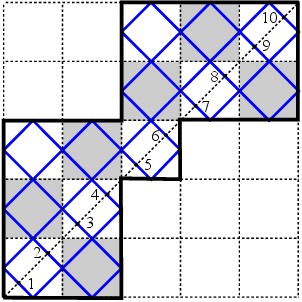}
\end{center}
\caption{\small A meander is the superimposition of two arch configurations.} \label{fig:meander-2-upper}
\end{figure}  

\section{Proof of Theorem \ref{thm:traj-balance}}

For any $(P,Q)\in\DDD_n\times\DDD_n$, a trajectory of light beam in the grid polygon associated to $(P,Q)$ can be partitioned into two families of parallel line segments, on the lines with slope 1 and $-1$, respectively. If a trajectory is assigned an orientation, we observe that the parallel segments of light of the trajectory, on different lines with the same slope, move alternately in the opposite directions.

\begin{lem} \label{lem:alternating-direction} Let $\ell_1,\ell_2$ be two parallel segments of light in a trajectory. The following properties hold.
\begin{enumerate}
\item
If $\ell_1,\ell_2$ are on the lines $y=x+\frac{4k+1}{2}$ and $y=x+\frac{4k+3}{2}$, respectively, for some integer $k$ then $\ell_1, \ell_2$ move in the opposite directions.
\item 
If $\ell_1,\ell_2$ are on the lines $y=-x+\frac{4k+1}{2}$ and $y=-x+\frac{4k+3}{2}$, respectively, for some integer $k$ then $\ell_1, \ell_2$ move in the opposite directions.
\end{enumerate}
\end{lem}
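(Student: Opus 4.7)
The plan is to reduce both parts to a local parity invariant preserved by a single reflection, and then iterate along the cyclic trajectory. Every reflection at the midpoint of a boundary step rotates the beam direction by $90^\circ$, because each step of $P$ or $-Q$ is horizontal or vertical and the beam meets it at $45^\circ$. Consequently, two consecutive slope-$1$ segments $\ell^{(i)}$ and $\ell^{(i+1)}$ of a trajectory are always separated by exactly one slope-$(-1)$ segment, whose two endpoints are the successive reflection midpoints $m_{i,1}$ and $m_{i,2}$.

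The heart of the proof is a four-case check at those two reflections. Write the intercept of $\ell^{(i)}$ as $c_i=(2k_i+1)/2$ and let $\epsilon_i\in\{+1,-1\}$ encode the direction of $\ell^{(i)}$, with $+1$ meaning NE and $-1$ meaning SW. Each of $m_{i,1}$ and $m_{i,2}$ is either the midpoint of an east step, of the form $(a+\tfrac12,b)$, or of a north step, of the form $(a,b+\tfrac12)$. Substituting these coordinates into the appropriate line equations, I would verify directly that in the two ``same type'' cases (both reflections off east steps, or both off north steps) one obtains $k_{i+1}-k_i$ even and $\epsilon_{i+1}=\epsilon_i$, while in the two ``different type'' cases one obtains $k_{i+1}-k_i$ odd and $\epsilon_{i+1}=-\epsilon_i$. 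In both situations, $\epsilon_{i+1}/\epsilon_i=(-1)^{k_{i+1}-k_i}$.

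Iterating this identity around the cyclic sequence of slope-$1$ segments of the trajectory, the quantity $\epsilon_i(-1)^{k_i}$ is the same for every slope-$1$ segment in the given trajectory. For $\ell_1$ on $y=x+(4k+1)/2$ one has $k_i=2k$ and $(-1)^{k_i}=1$, while for $\ell_2$ on $y=x+(4k+3)/2$ one has $k_i=2k+1$ and $(-1)^{k_i}=-1$, so the invariant forces $\epsilon_1=-\epsilon_2$, which is exactly statement (i). Part (ii) is proved in the same way, replacing slope-$1$ segments by slope-$(-1)$ segments and intercepts $y=x+c$ by intercepts $y=-x+c$; the four-case check at the two reflection midpoints is completely symmetric.

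The only genuine obstacle is the bookkeeping in that four-case verification. Each case requires tracking which of the two possible diagonal directions the beam takes after a reflection (i.e.\ which corner of the unit step the incident beam is closer to), and then updating both the intercept and the direction consistently. None of the individual computations is difficult, but organizing them so that the uniform relation $\epsilon_{i+1}/\epsilon_i=(-1)^{k_{i+1}-k_i}$ emerges from all four sub-cases is the substantive piece of work.
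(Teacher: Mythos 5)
Your proposal is correct: the four-case check does work out as you claim (for two reflections off steps of the same type the intercept index satisfies $k_{i+1}-k_i$ even and the direction is preserved, while for mixed types $k_{i+1}-k_i$ is odd and the direction reverses), so the quantity $\epsilon_i(-1)^{k_i}$ is conserved along the trajectory and forces the stated alternation. This is essentially the paper's own argument written in explicit coordinates: the paper packages the same conserved parity as a checkerboard coloring of the unit squares, observing that the trajectory circulates clockwise within white squares and counterclockwise within black ones, which is precisely your invariant in geometric form.
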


\begin{proof}
For convenience, we color the unit squares in the grid polygon in black and white like
a checkerboard. A unit square is colored white if its upper right corner $(i, j)$ satisfies the
condition that $i+j$ is even, and black otherwise, as shown in Figure \ref{fig:meander-2-upper}. Choose a white square along the path $P$ and emit a beam of light from the midpoint of its left edge to the midpoint of its top edge. We have the following observations.

\begin{enumerate}
\item The light beam goes northeast-bound, hits an east step of $P$, which is the top edge of a white square, and turns 90$^\circ$ clockwise. 
\item Then the light beam either hits a north step (the right edge of a white square) of $-Q$, turns 90$^\circ$ clockwise and hits its matching east step (the bottom edge of a white square), or hits an east step (the bottom edge of a black square) of $-Q$,  turns 90$^\circ$ counter clockwise and hits its matching north step (the right edge of a black square).
\item In the former case, the light beam turns 90$^\circ$  clockwise and hits either a north step or an east step of $P$. 
In the latter case, the light beam turns 90$^\circ$ counter clockwise and hits either an east step or a north step of $P$.
\end{enumerate} 
Along the resulting trajectory, we observe that the segments of light within each white (black, respectively) square move clockwise (counter clockwise, respectively). The result follows.
\end{proof}

\begin{exa} {\rm
For the grid polygon shown in Figure \ref{fig:meander-polygon}, consider the trajectory of the cycle $(1,10,11,14,7,8,13,12,9,4,17,20)$ of that permutation $\pi$. If the trajectory is oriented in the direction from the entry 20 to the entry 1, then the light segments between the entries $1$ and $10$, between the entries $7$ and $8$, and between the entries $13$ and $12$ move in northeast direction. Moreover, the light segments between the entries $11$ and $14$, between the entries $9$ and $4$, and between the entries $17$ and $20$ move in southwest direction. 
}
\end{exa}

\begin{lem} \label{lem:peak-valley}
For $n\ge 2$, if $P, P'$ are two Dyck paths in $\DDD_n$ such that $P'$ is obtained from $P$ by replacing a peak  by a valley then $\traj(P)-\traj(P')=\pm 1$. 
\end{lem}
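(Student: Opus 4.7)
The plan is to reduce Lemma \ref{lem:peak-valley} to a local analysis near the swapped peak/valley. Assume the peak of $P$ sits at the lattice point $(a,b)$ with $h=b-a\ge 2$, and let $\square$ denote the unit square whose top-right corner is $(a,b)$. Write $\sigma$ and $\varepsilon$ for the unit squares immediately south and east of $\square$ (both interior to the grid polygon). Let $L,T,B,R$ be the midpoints of the left, top, bottom, right edges of $\square$; let $B_L,B_R$ be the midpoints of the left and right edges of $\sigma$; and let $R_T,R_B$ be the midpoints of the top and bottom edges of $\varepsilon$. In the polygon for $P$ the square $\square$ is interior with $L,T$ on the boundary, while in the polygon for $P'$ it is exterior with $B,R$ on the boundary.

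First I would describe the local chunks of beams. In $P$ the slope-$1$ beam through $B,R$ passes straight across $\square$ and joins $B_L$ to $R_T$ (call this chunk $C_1$); the slope-$(-1)$ beam entering at $B$ reflects at $L$, then at $T$, and exits through $R$, joining $B_R$ to $R_B$ (chunk $C_2$). In $P'$ the square $\square$ is excluded, so the beams simply reflect at $B$ inside $\sigma$ (yielding a chunk $C_1'$ from $B_L$ to $B_R$) and at $R$ inside $\varepsilon$ (yielding a chunk $C_2'$ from $R_T$ to $R_B$).

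Next I would apply Lemma \ref{lem:alternating-direction}, via its checkerboard interpretation, to assign a consistent orientation to each of these chunks. A short case analysis on the parity of $a+b$ shows that the four endpoints split canonically into two \emph{entries} of the local region (where the beam flows inward) and two \emph{exits} (where it flows outward), and that every one of $C_1,C_2,C_1',C_2'$ is a directed path from an entry to an exit. Consequently the portion of each trajectory that lies outside the local region is a directed chain joining an exit back to an entry, so the external contribution is a \emph{bipartite} matching between the two exits and the two entries. Therefore only two external matchings $(A)$ and $(B)$ are possible, and the \emph{third} undirected matching --- which would pair two entries together (and two exits together) --- is ruled out; this is precisely what forces the difference of trajectory counts to be $\pm 1$ rather than $0$.

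Combining each external matching with the local chunks and counting cycles, under $(A)$ the peak configuration produces two local cycles while the valley configuration yields a single cycle through all four endpoints, giving $\traj(P)-\traj(P')=+1$; under $(B)$ the roles of peak and valley reverse, so $\traj(P)-\traj(P')=-1$. The main technical step is the orientation argument in the third paragraph: one must carefully apply the black/white colouring rule from the proof of Lemma \ref{lem:alternating-direction} in both parities of $a+b$ and in both configurations to confirm that the four endpoints really do form a bipartite entry/exit system. Boundary cases in which one of $B_L,B_R,R_T,R_B$ happens to lie on the outer boundary of the polygon can be handled by extending the affected chunk through the additional reflection there, without disturbing the bipartite structure.
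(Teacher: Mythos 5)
Your proof is correct and follows essentially the same route as the paper's: the paper likewise isolates the unit squares around the modified corner, uses Lemma \ref{lem:alternating-direction} to orient the local beam segments (its ``relative order of the eight points''), and thereby excludes the one reconnection pattern that would leave $\traj$ unchanged, so only the split and merge cases with difference $\pm 1$ survive. The only slip is that $\square$ should be the interior unit square whose top-\emph{left} corner is the peak $(a,b)$, not its top-right corner; this does not affect the argument.
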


\begin{proof}
Let $P=p_1p_2\cdots p_{2n}$. Suppose $p_ip_{i+1}=\N\E$ is a peak of $P$ at height at least 2 and 
$P'$ is obtained from $P$ by replacing $p_ip_{i+1}$ by $\E\N$.  Depicted in Figures \ref{fig:single-trajectory}(i) and \ref{fig:two-trajectories}(i), let $b,c$ be the midpoints of $p_i, p_{i+1}$, respectively. Then the shaded unit squares are enclosed in the grid polygon associated to $(P,L_n)$, and the four points $\{a, b, c, d\}$ ($\{e, f, g, h\}$, respectively) are in a trajectory of a beam of light. There are two cases.

Case 1. Those eight points are in the same trajectory. By Lemma \ref{lem:alternating-direction},  if the trajectory is oriented from $b$ to $c$ then the light moves in the direction from $g$ to $f$. The relative order of those eight points is shown in Figure \ref{fig:single-trajectory}(ii). If the peak $p_ip_{i+1}$ is replaced by a valley, as shown in Figure \ref{fig:single-trajectory}(iii)-(iv), then the trajectory is split into two trajectories, one passing the points $a, f$ and $e$ and the other passing the points $h, g$ and $d$. Thus, $\traj(P)-\traj(P')=-1$.

Case 2. Those eight points are in two different trajectories. The relative order of those eight points is shown in Figure \ref{fig:two-trajectories}(ii). If the peak $p_ip_{i+1}$ is replaced by a valley, as shown in Figure \ref{fig:two-trajectories}(iii)-(iv), then these two trajectories are merged into one trajectory, passing the points $a, f, e, h, g$ and $d$. Thus, $\traj(P)-\traj(P')=1$.

The result follows.
\end{proof}

\begin{figure}[ht]
\begin{center}
\includegraphics[width=3in]{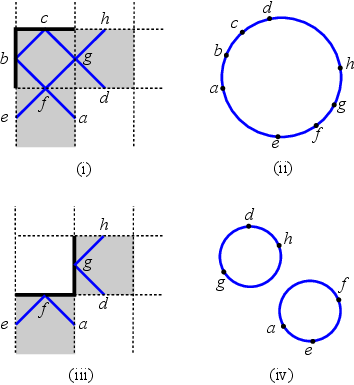}
\end{center}
\caption{\small A trajectory of light beam split into two trajectories.} \label{fig:single-trajectory}
\end{figure}

\begin{figure}[ht]
\begin{center}
\includegraphics[width=3in]{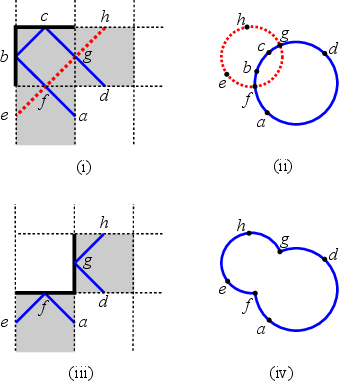}
\end{center}
\caption{\small Two trajectories of light beams merged into one trajectory.} \label{fig:two-trajectories}
\end{figure}

\begin{lem} \label{lem:valley-peak}
For $n\ge 2$, if $P, P'$ are two Dyck paths in $\DDD_n$ such that $P'$ is obtained from $P$ by replacing a valley by a peak  then $\traj(P)-\traj(P')=\pm 1$. 
\end{lem}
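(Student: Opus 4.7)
The plan is to derive this lemma as an immediate corollary of Lemma \ref{lem:peak-valley} by observing that replacing a valley by a peak is the inverse operation of replacing a peak by a valley.

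First I would verify a small height compatibility: suppose $P'$ is obtained from $P$ by replacing a valley $\E\N$ at positions $p_i p_{i+1}$ by a peak $\N\E$. If the $\E$ step at position $i$ in $P$ goes from $(x,y)$ to $(x+1,y)$, then since $P$ is a Dyck path we must have $y\ge x+1$ (otherwise the endpoint of the east step would lie strictly below $y=x$). Consequently the corner of the new peak $\N\E$ in $P'$ lies at the point $(x,y+1)$, which sits on the line $y=x+h$ with $h=y+1-x \ge 2$. Hence $P$ is obtained from $P'$ by replacing a peak of $P'$ at height at least $2$ by a valley, which is exactly the situation addressed in Lemma \ref{lem:peak-valley}.

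Applying Lemma \ref{lem:peak-valley} to the pair $(P',P)$ in place of $(P,P')$ yields $\traj(P')-\traj(P)\in\{1,-1\}$, which is equivalent to $\traj(P)-\traj(P')=\pm 1$, as desired. The only (minor) obstacle is the height bookkeeping above: without it one would have to worry about creating a peak at height $1$, but this cannot occur because the corresponding valley in $P$ forces its corner off the diagonal. Once this is noted, the lemma follows by inversion, and no separate case analysis of trajectories splitting versus merging is needed (those two cases are simply swapped under the inverse operation).
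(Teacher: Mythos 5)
Your argument is correct, and it is a genuinely shorter route than the one the paper takes. You observe that replacing a valley by a peak is the inverse of the operation in Lemma \ref{lem:peak-valley}, check that the resulting peak necessarily has height at least $2$ (your coordinate computation is right: the corner of a valley of a Dyck path sits at $(x+1,y)$ with $y\ge x+1$, so the new peak's corner $(x,y+1)$ has height $y+1-x\ge 2$), and then apply Lemma \ref{lem:peak-valley} to the pair $(P',P)$ to get $\traj(P')-\traj(P)=\pm1$, which is the claim. The paper instead writes out a second self-contained case analysis: it identifies the six relevant midpoints around the valley, uses Lemma \ref{lem:alternating-direction} to orient the trajectory, and distinguishes the case where the six points lie on one trajectory (which splits) from the case where they lie on two (which merge) --- in effect reading the figures of Lemma \ref{lem:peak-valley} backwards. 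The paper's version records explicitly which local configuration corresponds to a split and which to a merge, but since only the conclusion $\traj(P)-\traj(P')=\pm1$ (and hence the parity change) is ever used downstream, your derivation by inversion loses nothing and avoids the duplicated casework. The one thing worth stating explicitly if you adopt this route is that the hypothesis of Lemma \ref{lem:peak-valley} is met because both $P$ and $P'$ are assumed to be Dyck paths, which is exactly what your height check confirms.
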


\begin{proof}
Let $P=p_1p_2\cdots p_{2n}$. Suppose $p_ip_{i+1}=\E\N$ is a valley of $P$ and 
$P'$ is obtained from $P$ by replacing $p_ip_{i+1}$ by $\N\E$. Depicted in Figures \ref{fig:single-trajectory}(iii) and \ref{fig:two-trajectories}(iii), let $f,g$ be the midpoints of $p_i, p_{i+1}$, respectively. Then the shaded unit squares are enclosed in the grid polygon $(P,L_n)$, and the three points $\{a, f, e\}$ ($\{d, g, h\}$, respectively) are in a trajectory of a beam of light. There are two cases.

Case 1. Those six points are in the same trajectory. By Lemma \ref{lem:alternating-direction},  if the trajectory is oriented from $e$ to $f$ then the light moves in the direction from $g$ to $h$. The relative order of those six points is shown in Figure \ref{fig:two-trajectories}(iv). If the valley $p_ip_{i+1}$ is replaced by a peak, as shown in Figure \ref{fig:two-trajectories}(i)-(ii), then the trajectory is split into two trajectories, one passing the points $e, f, g$ and $h$ and the other passing the points $a, b, c$ and $d$. Thus, $\traj(P)-\traj(P')=-1$.

Case 2. Those six points are in two different trajectories. The relative order of those six points is shown in Figure \ref{fig:single-trajectory}(iv). If the valley $p_ip_{i+1}$ is replaced by a peak, as shown in Figure \ref{fig:single-trajectory}(i)-(ii), then these two trajectories are merged into one trajectory, passing the points $a, b, c, d, h, g, f$ and $e$. Thus, $\traj(P)-\traj(P')=1$.

The result follows.
\end{proof}

In the following, we construct the involution $\varphi:\DDD_n\rightarrow\DDD_n$ in Theorem \ref{thm:traj-balance}(i).
First, we describe the fixed points of the involution. Let $\F_n\subset\DDD_n$ be the set of Dyck paths containing no peak with even $y$-coordinate and no valley with odd $x$-coordinate. For example, the grid polygon shown in Figure \ref{fig:Dyck-path-fixed-point} is associated to an ordered pair $(P,Q)\in\F_n\times\F_n$. (Note that the peaks (valleys, respectively) of $Q$ become the valleys (peaks, respectively) of the negative Dyck path $-Q$.)

\begin{figure}[ht]
\begin{center}
\includegraphics[width=2in]{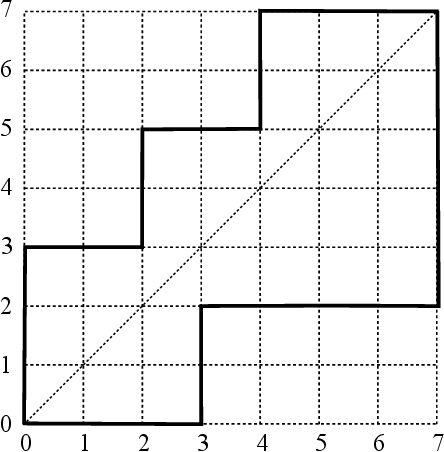}
\end{center}
\caption{\small A grid polygon associated to an ordered pair in $\F_n\times\F_n$.} \label{fig:Dyck-path-fixed-point}
\end{figure}

\begin{lem} \label{lem:number-of-fixed-points} For $n\ge 1$, we have
\begin{equation}
|\F_n|= 
\begin{cases} 
0 & \mbox{if $n$ is even,} \\
c_k & \mbox{if $n=2k+1$.}
\end{cases}
\end{equation}
\end{lem}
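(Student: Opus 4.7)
\medskip
\noindent\textbf{Proof plan.} The plan is to split according to the parity of $n$. For $n$ even, observe that every $P\in\DDD_n$ must end with an east step (otherwise its penultimate vertex $(n,n-1)$ would lie strictly below the diagonal $y=x$). Hence the last maximal run of north steps in $P$ is followed by an east step, forming a peak whose $y$-coordinate equals $n$. Since $n$ is even, this peak is forbidden in $\F_n$, so $\F_n=\emptyset$.

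For $n=2k+1$, the plan is to exhibit an explicit bijection $\F_n\to\DDD_k$ via block decomposition. Write $P\in\F_n$ as $P=\N^{u_1}\E^{d_1}\cdots\N^{u_p}\E^{d_p}$ with all $u_i,d_i\ge 1$. The $i$-th peak of $P$ has $y$-coordinate $u_1+\cdots+u_i$ and the $j$-th valley has $x$-coordinate $d_1+\cdots+d_j$, so the defining conditions of $\F_n$ translate into: $u_1$ is odd, $u_2,\ldots,u_p$ are even, and $d_1,\ldots,d_{p-1}$ are even; this then forces $d_p$ to be odd because $d_1+\cdots+d_p=2k+1$. Introduce positive integers $v_i,w_i$ by $u_1=2v_1-1$, $u_i=2v_i$ for $i\ge 2$, $d_j=2w_j$ for $j\le p-1$, and $d_p=2w_p-1$, so that $\sum v_i=\sum w_i=k+1$.

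Next, I would substitute into the Dyck condition $u_1+\cdots+u_j\ge d_1+\cdots+d_j$ (holding for every $j$). For $j<p$ it becomes $2(v_1+\cdots+v_j)-1\ge 2(w_1+\cdots+w_j)$, which, since both sides are integers, upgrades to the \emph{strict} inequality $v_1+\cdots+v_j>w_1+\cdots+w_j$; at $j=p$ it reduces to the equality $k+1=k+1$. Therefore $P':=\N^{v_1}\E^{w_1}\cdots\N^{v_p}\E^{w_p}$ is a Dyck path of size $k+1$ meeting the diagonal only at its two endpoints, i.e.\ an elevated Dyck path, and stripping its initial $\N$ and final $\E$ yields a path $P''\in\DDD_k$. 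The substitutions are patently reversible, so $P\mapsto P''$ is a bijection and $|\F_{2k+1}|=c_k$.

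The only subtle point I expect is the parity bookkeeping: it is precisely the strict inequality $v_1+\cdots+v_j>w_1+\cdots+w_j$ obtained from $2V-1\ge 2W$ by integrality that places $P'$ among the $c_k$ elevated Dyck paths of size $k+1$; the boundary case $p=1$ where $P=\N^n\E^n$ then checks trivially.
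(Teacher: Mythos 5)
Your proof is correct and follows essentially the same route as the paper: both decompose $P\in\F_n$ into maximal runs, translate the peak/valley parity conditions into "first north run odd, all other runs even, last east run odd," and then halve the run lengths to land on Catalan-many objects (the paper maps directly to $\DDD_k$ via $u_1=2a_1+1$, $d_t=2b_t+1$, while you map to elevated Dyck paths of size $k+1$). Your version is, if anything, slightly more careful, since you explicitly check that the Dyck inequality survives the substitution and give a direct reason (last peak at height $n$) why $\F_n=\emptyset$ for even $n$.
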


\begin{proof}
Note that $|\F_1|=1$. Let $n\ge 2$. We observe that for some $t\ge 1$, every path $P\in\F_n$ is of the form
\begin{equation} \label{eqn:Dyck-path-double}
P=\N^{2a_1+1}\E^{2b_1}\N^{2a_2}\E^{2b_2}\cdots\N^{2a_t}\E^{2b_t+1}
\end{equation}
for some positive integers $a_1,\dots,a_t$ and $b_1,\dots,b_t$ with $a_1+\cdots+a_t=b_1+\cdots+b_t=n-1$. Thus, every path in $\F_n$ has odd size, say $n=2k+1$. Moreover, the paths $P$ in (\ref{eqn:Dyck-path-double}) are in one-to-one correspondence to the paths $P'=\N^{a_1}\E^{b_1}\N^{a_2}\E^{b_2}\cdots\N^{a_t}\E^{b_t}$ in $\DDD_k$. The result follows.
\end{proof}

\medskip
\begin{lem} \label{lem:cyc-odd-number}
For all $(P,Q)\in\F_{2k+1}\times\F_{2k+1}$, $\traj(P,Q)$ is odd.
\end{lem}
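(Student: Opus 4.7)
My plan is to establish the stronger identity
\begin{equation*}
\traj(P, Q) \;=\; 2\,\traj(P', Q') + 1
\qquad\text{for every }(P, Q) \in \F_{2k+1} \times \F_{2k+1},
\end{equation*}
where $(P', Q')\in\DDD_k\times\DDD_k$ is the reduced pair obtained from $(P,Q)$ via the bijection in the proof of Lemma~\ref{lem:number-of-fixed-points} (remove the extra leading $\N$ and trailing $\E$ of each path, then halve every run). Oddness of $\traj(P, Q)$ is then immediate.

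I would work in the meander picture of Theorem~\ref{thm:Dyck-path-to-semimeander}(i), so that $\traj(P, Q)=\comp(T)$ for the meander $T$ on $4k+2$ points whose upper and lower arches are the matching pairs of $P$ and $Q$. The first step is to describe the matching pairs of $P\in\F_{2k+1}$ by tracing the stack-based parenthesization through the run structure $\N^{2a_1+1}\E^{2b_1}\cdots\N^{2a_t}\E^{2b_t+1}$. Using that every corner height of $P$ at an interior lattice point is odd (and therefore positive), the leading $\N$ of $P$ is never matched before the trailing $\E$, so it contributes the outermost pair $(1, 4k+2)$; and for every matching pair $(i, j)$ of $P'$, the doubling of the $i$-th and $j$-th steps of $P'$ contributes the nested pair of matching pairs $(2i, 2j+1)$ and $(2i+1, 2j)$ in $P$. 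An identical description applies to $Q$ and $Q'$.

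The outermost pair $(1, 4k+2)$ is common to both arch configurations of $T$, so it produces a digon component on the two vertices $1$ and $4k+2$. I would relabel the remaining $4k$ vertices into ``red'' $R_i$ (for position $2i$) and ``blue'' $B_i$ (for position $2i+1$), with $1\le i\le 2k$. The arch description then exhibits the sub-meander of $T$ as a bipartite $2$-regular graph in which $R_i$ is connected by an upper arch to $B_{A'(i)}$ and by a lower arch to $B_{B'(i)}$, and symmetrically $B_i$ is connected to $R_{A'(i)}$ and to $R_{B'(i)}$, where $A'$ and $B'$ denote the matching pair involutions of $P'$ and $Q'$. Under the projection $R_i, B_i\mapsto i$, this bipartite graph is a $2$-fold cover of the meander $T^*$ associated to $(P', Q')$. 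Since every cycle of $T^*$ alternates upper and lower arches and therefore has even length $2\ell$, and since each step of the cover switches color, each such cycle lifts to exactly two disjoint cycles of length $2\ell$ in the cover (one starting at a red vertex, one at a blue vertex). Hence the sub-meander has $2\comp(T^*) = 2\,\traj(P', Q')$ components, and together with the digon this gives the desired identity.

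The main obstacle I expect is the verification of the arch expansion in the second paragraph: the parenthesization has to be tracked carefully through the alternating doubled runs of $P$, and the identification of the outermost pair relies on confirming that the odd-height structure of $P\in\F_{2k+1}$ forbids any interior return to the diagonal.
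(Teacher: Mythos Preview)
Your proposal is correct and establishes a strictly stronger statement than the paper does, by a genuinely different method.

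The paper argues only at the level of parity.  It notes that $L=\N^{2k+1}\E^{2k+1}\in\F_{2k+1}$ with $\traj(L,L)=2k+1$, and then shows that any $(P,Q)\in\F_{2k+1}\times\F_{2k+1}$ can be reached from $(L,L)$ by repeatedly removing a $2\times 2$ square from the grid polygon; each such removal is realized as a sequence of four peak--to--valley replacements, and by Lemma~\ref{lem:peak-valley} each replacement changes $\traj$ by $\pm 1$, so four of them preserve the parity.  Hence $\traj(P,Q)\equiv 2k+1\pmod 2$.

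Your argument instead analyses the arch configuration directly and obtains the exact identity $\traj(P,Q)=2\,\traj(P',Q')+1$.  The key structural observation---that the matching pairs of $P\in\F_{2k+1}$ are precisely $(1,4k+2)$ together with the nested pairs $(2i,2j+1),(2i+1,2j)$ for each matching pair $(i,j)$ of $P'$---is correct; it follows from the height computation $H_{2m+1}=1+2H'_m$ and $H_{2m}\ge 2$ for $1\le m\le 2k$ (the latter because $p'_m=\E$ forces $H'_{m-1}\ge 1$).  The double--cover step is also sound: each vertex of a cycle of $T^*$ appears exactly once in that cycle, so the red and blue lifts through a $2\ell$--cycle remain disjoint and each have length $2\ell$.

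What each approach buys: the paper's proof is shorter and reuses only the already--established Lemma~\ref{lem:peak-valley}, while yours gives a precise recursive formula for $\traj$ on $\F_{2k+1}\times\F_{2k+1}$ and explains the odd parity structurally rather than inductively.  The ``main obstacle'' you flag (tracking the parenthesization through the doubled runs, and confirming no interior return to the diagonal) is real but routine once the height formulas above are written down.
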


\begin{proof} 
Let $L=\N^{2k+1}\E^{2k+1}$. Note that $L\in\F_{2k+1}$ and $\traj(L,L)=2k+1$.
For any $(P,Q)\in\F_{2k+1}\times\F_{2k+1}$, we define a serial operation to remove a $2\times 2$ square from the grid polygon associated to $(P,Q)$ while preserving the parity of $\traj$. Suppose $P$ contains a peak at height at least 4. Starting from such a peak, we create four Dyck paths $P_0=P, P_1, P_2, P_3, P_4$ such that $P_i$ is obtained from $P_{i-1}$ by replacing a peak by a valley, by the serial replacements shown in Figure \ref{fig:remove-four-peaks}. Note that $P_4$ is in $\F_{2k+1}$. By Lemma \ref{lem:peak-valley}, we have $\traj(P_4,Q)\equiv\traj(P,Q)\pmod 2$. (A symmetric operation can  apply to the path $Q$.) Moreover, the grid polygon of $(P,Q)$ can be obtained from that of $(L,L)$ by a sequence of such operations. Thus, $\traj(P,Q)\equiv\traj(L,L)\pmod 2$, which is odd. The result follows.
\end{proof}

\begin{figure}[ht]
\begin{center}
\includegraphics[width=4.2in]{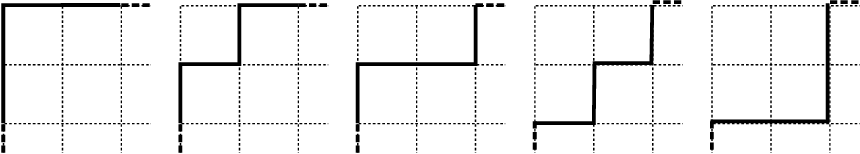}
\end{center}
\caption{\small A serial operation to remove a $2\times 2$ square from a grid polygon.} \label{fig:remove-four-peaks}
\end{figure}

For any Dyck path $P$, we say that a peak is \emph{good} (\emph{bad}, respectively) if its $y$-coordinate is odd (even, respectively), and that a valley is \emph{good} (\emph{bad}, respectively) if its $x$-coordinate is even (odd, respectively).  By a \emph{corner} of a Dyck path we mean a peak or a valley. If $P$ is of even size, we observe  that the last peak of $P$ is always a bad corner. 

\medskip
Now, we describe a simple construction of the involution $\varphi:P\mapsto P'$ on $\DDD_n$.
Let $P=p_1p_2\cdots p_{2n}$. If $P$ contains no bad corner then $n$ is odd and we set $\varphi(P)=P$. Otherwise, find the first bad corner in $P$, say $p_ip_{i+1}$.
The corresponding path $P'=p_1\cdots p_{i-1}p'_ip'_{i+1}p_{i+2}\cdots p_{2n}$ is obtained from $P$ by interchanging   $p_i, p_{i+1}$, i.e., $p'_ip'_{i+1}=p_{i+1}p_i$.

\begin{lem} \label{lem:involution} The map $\varphi$ is an involution. If $P\neq P'$ then $\traj(P)-\traj(P')= \pm 1$.
\end{lem}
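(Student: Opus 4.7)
My plan is to verify the two claims in three stages: (a) well-definedness and the height restriction needed to invoke Lemma~\ref{lem:peak-valley}; (b) the involution property via control of the first bad corner of $P'$; (c) the $\pm 1$ change in $\traj$.

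For (a), I would use the block decomposition $P = \N^{a_1}\E^{b_1}\N^{a_2}\E^{b_2}\cdots$ together with the classification of $\F_n$ in Lemma~\ref{lem:number-of-fixed-points}. A straightforward parity bookkeeping on the partial sums $a_1+\cdots+a_j$ and $b_1+\cdots+b_j$ shows that if the first bad corner is a peak at $(x,y)$ with $y$ even, then both $x$ and $y$ are even, and if it is a valley at $(x,y)$ with $x$ odd, then both $x$ and $y$ are odd. I would then rule out a bad peak at height $1$: such a peak at $(x,x+1)$ forces $x$ odd, but then $(x,x-1)$ lies below the diagonal so the preceding step $p_{2x}$ must be east, giving a valley $p_{2x}p_{2x+1} = \E\N$ at $(x,x)$ with $x$ odd. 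That valley is already bad, contradicting the minimality of the peak. Hence the peak swap, when it occurs, happens at height $\geq 2$ so Lemma~\ref{lem:peak-valley} applies; the valley swap is unconditionally legal for Lemma~\ref{lem:valley-peak}.

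For (b), the steps $p_1,\dots,p_{i-1}$ are unchanged by $\varphi$, so any corner of $P'$ at a position $\leq i-2$ agrees with the corresponding corner of $P$ and is good. At position $i-1$ the corner may change: in the peak case, either $p_{i-1}=\E$ (the good valley at $i-1$ in $P$ disappears in $P'$, since $p_{i-1}p'_i=\E\E$) or $p_{i-1}=\N$ (a new peak at $(x,y-1)$ emerges in $P'$; it is good because $y-1$ is odd). In both subcases any corner of $P'$ at position $i-1$ is good. The corner of $P'$ at position $i$ is the valley $(x+1,y-1)$ with $x+1$ odd, hence bad, so this is the first bad corner of $P'$ and applying $\varphi$ again recovers $P$. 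The bad valley case is symmetric: a possibly-new valley at $(x-1,y)$ emerging at position $i-1$ in $P'$ would be good by the parity of $x-1$.

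Finally, since the swap replaces a peak of height $\geq 2$ by a valley, or a valley by a peak, Lemma~\ref{lem:peak-valley} or Lemma~\ref{lem:valley-peak} gives $\traj(P)-\traj(P')=\pm 1$. I expect the main obstacle to be the bookkeeping in stage~(b): keeping track of whether a corner at position $i-1$ appears or disappears under the swap, and verifying via the computed parities of $(x,y)$ that any such corner in $P'$ is good; the height-$1$ exclusion in stage~(a) is the other subtle point.
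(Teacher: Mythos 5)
Your proof is correct and follows essentially the same route as the paper: locate the first bad corner, use the parity bookkeeping to show a bad peak has both coordinates even and a bad valley both odd, check that the swapped corner is the first bad corner of $P'$, and invoke Lemmas~\ref{lem:peak-valley} and~\ref{lem:valley-peak}. You are in fact more careful than the paper on two points it leaves implicit --- that the bad peak has even height and hence height at least $2$ so the peak-to-valley swap is legal, and that the possibly changed corner at position $i-1$ remains good --- and both verifications check out.
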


\begin{proof} Suppose $P\neq P'$. Let $p_ip_{i+1}$ be the first bad corner of $P$. Then in the prefix $p_1\cdots p_i$ of $P$, the $x$-coordinate and $y$-coordinate of each corner $p_jp_{j+1}$ ($j<i$) have the opposite parities. There are two cases.

Case 1. $p_ip_{i+1}$ is a peak. Then both of its $x$-coordinate and $y$-coordinate are even. Thus, $p'_ip'_{i+1}$ is a valley with odd $x$-coordinate and $y$-coordinate.

Case 2. $p_ip_{i+1}$ is a valley. Then both of its $x$-coordinate and $y$-coordinate are odd. Thus, $p'_ip'_{i+1}$ is a peak with even $x$-coordinate and $y$-coordinate.

Note that in either case $p'_ip'_{i+1}$ is the first bad corner of $P'$. Thus, the map $\varphi$ is an involution. By Lemmas \ref{lem:peak-valley} and  \ref{lem:valley-peak}, we have $\traj(P)-\traj(P')= \pm 1$.
\end{proof}

\medskip
\noindent
\emph{Proof of Theorem \ref{thm:traj-balance}.}
By Lemmas \ref{lem:number-of-fixed-points}, \ref{lem:cyc-odd-number} and \ref{lem:involution}, we have
\begin{align*}
\sum_{P\in\DDD_n} (-1)^{\traj(P)} &= \sum_{P\in\F_n} (-1)^{\traj(P)} \\
&= -|\F_n|  \\
&=\begin{cases} 
0 & \mbox{if $n$ is even,} \\
-c_k & \mbox{if $n=2k+1$.}
\end{cases}
\end{align*}
The assertion (i) of Theorem \ref{thm:traj-balance} follows.

Now, we describe the involution $\psi:(P,Q)\mapsto (P', Q')$ on $\DDD_n\times\DDD_n$. If $(P,Q)$ is an ordered pair in $\F_n\times\F_n$ then $n$ is odd and we set $\psi((P,Q))=(P,Q)$. Otherwise, at least one of $P$ and $Q$ contains a bad corner. The corresponding ordered pair $(P',Q')$ is constructed as follows. If $P$ contains a bad corner, find the corresponding path $\varphi(P)$ and set the ordered pair $(P',Q')=(\varphi(P),Q)$. Otherwise, $P\in\F_n$ and $Q$ contains a bad corner. Find the corresponding path $\varphi(Q)$ and set the ordered pair $(P',Q')=(P,\varphi(Q))$.

By  Lemmas \ref{lem:number-of-fixed-points}, \ref{lem:cyc-odd-number} and \ref{lem:involution}, we have
\begin{align*}
\sum_{(P,Q)\in\DDD_n\times\DDD_n} (-1)^{\traj(P,Q)} &= \sum_{(P,Q)\in\F_n\times\F_n} (-1)^{\traj(P,Q)} \\
&= -|\F_n\times\F_n|  \\
&=\begin{cases} 
0 & \mbox{if $n$ is even,} \\
-c_k^2 & \mbox{if $n=2k+1$.}
\end{cases}
\end{align*}
The assertion (ii) of Theorem \ref{thm:traj-balance} follows.
\qed

\section{Proof of Theorem \ref{thm:up-to-a-sign}}
In this section, we study the $(-1)$-evaluation of the generating function for $\traj$ on the set $\A_{n+1}$ ($\B_n$, respectively) of Dyck paths with all peaks at odd (even, respectively) heights.

For any $P\in\DDD_n$, let $\area(P)$ denote the number of unit squares under the Dyck path $P$ and above the line $y=x$. For example, on the left hand side of Figure \ref{fig:Dyck-odd-Motzkin} is a Dyck path $P$ in $\G_7$ with $\area(P)=14$. 
In particular, we have $\area(L_n)=\binom{n}{2}$, where $L_n=\N^n\E^n$. Those $\binom{n}{2}$ unit squares are partitioned into $n-1$ rows such that the $i$th row from top has $n-i$ squares. 

\begin{figure}[ht]
\begin{center}
\includegraphics[width=3.6in]{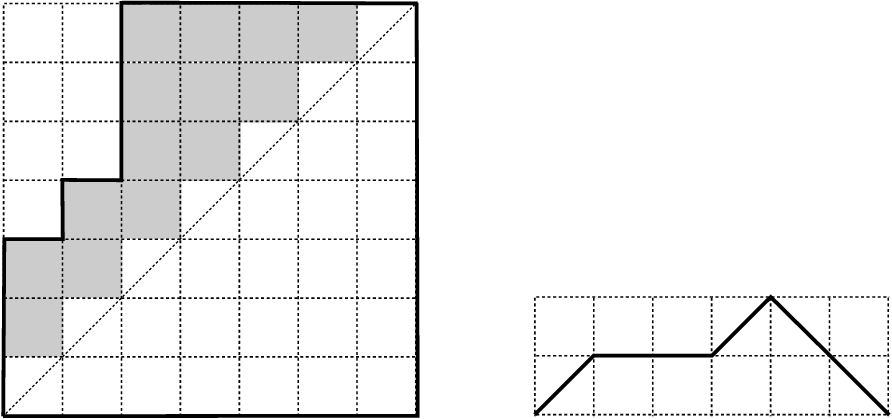}
\end{center}
\caption{\small A Dyck path $P\in\A_7$ and its corresponding Motzkin path $\phi_A(P)$.} 
\label{fig:Dyck-odd-Motzkin}
\end{figure}

Let $Z_n=(\N\E)^n$. Note that $Z_n$ is a Dyck path in $\A_n$ with $\area(Z_n)=0$. Using Lemmas \ref{lem:peak-valley} and \ref{lem:valley-peak}, we determine the parity of $\traj(P)$ from the values $\area(Z_n)$ and $\traj(Z_n)$.

\begin{lem} \label{lem:Z_n+1} For $n\ge 1$, $\traj(Z_{n+1})$ is odd if $n\equiv 0, 1\pmod 4$ and even if $n\equiv 2, 3\pmod 4$.
\end{lem}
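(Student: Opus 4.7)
The plan is to pin down the parity of $\traj(Z_{n+1})$ in two stages: first compute $\traj(L_{n+1})$ directly, then pass from $L_{n+1}$ to $Z_{n+1}$ through a chain of peak-to-valley swaps and apply Lemmas~\ref{lem:peak-valley} and \ref{lem:valley-peak}.

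For the first stage, I would argue that $\traj(L_{n+1})=n+1$. The grid polygon associated to $(L_{n+1},L_{n+1})$ is simply the axis-aligned $(n+1)\times(n+1)$ square, so every billiard trajectory inside has length four: starting from the left-edge midpoint $(0,\,k+\tfrac12)$, the beam travels to the top-edge midpoint $(n+\tfrac12-k,\,n+1)$, then to the right-edge midpoint $(n+1,\,n+\tfrac12-k)$, then to the bottom-edge midpoint $(k+\tfrac12,\,0)$, and returns to the start. Letting $k$ range over $\{0,1,\ldots,n\}$ produces $n+1$ disjoint trajectories partitioning the $4(n+1)$ edge-midpoints.

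For the second stage, note that $\area(L_{n+1})=\binom{n+1}{2}$ and $\area(Z_{n+1})=0$, and that each peak-to-valley swap reduces the area by exactly one. Hence the greedy strategy of repeatedly swapping a peak of height $\ge 2$ (such a peak exists whenever the current path differs from $Z_{n+1}$) produces a chain of exactly $\binom{n+1}{2}$ swaps from $L_{n+1}$ to $Z_{n+1}$. By Lemmas~\ref{lem:peak-valley} and \ref{lem:valley-peak}, each swap along the chain changes $\traj$ by $\pm 1$, and so
\[
\traj(Z_{n+1}) \;\equiv\; \traj(L_{n+1}) + \binom{n+1}{2} \;=\; (n+1)+\binom{n+1}{2} \;=\; \binom{n+2}{2} \pmod 2.
\]

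Writing $n=4k+r$ with $r\in\{0,1,2,3\}$ then shows in one line each that $\binom{n+2}{2}=(n+1)(n+2)/2$ is odd precisely for $r\in\{0,1\}$ and even for $r\in\{2,3\}$, which is exactly the parity asserted. The only step demanding genuine work is the direct enumeration giving $\traj(L_{n+1})=n+1$; everything else is a clean parity accounting via the machinery of Section~3.
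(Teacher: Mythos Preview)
Your proof is correct but follows a genuinely different route from the paper's. The paper computes $\traj(Z_{n+1})$ \emph{exactly} by writing down the meander associated to $(Z_{n+1},L_{n+1})$: its upper arch configuration is $\{(1,2),(3,4),\dots,(2n+1,2n+2)\}$ and its lower one the rainbow $\{(1,2n+2),\dots,(n+1,n+2)\}$, from which one reads off $\traj(Z_{n+1})=1+\lfloor n/2\rfloor$ in one line and checks the stated parity.

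By contrast, you compute $\traj(L_{n+1})=n+1$ directly from the square billiard and then transport the parity across the $\binom{n+1}{2}$ peak-to-valley moves linking $L_{n+1}$ to $Z_{n+1}$, invoking Lemma~\ref{lem:peak-valley}. This is entirely valid (and the greedy chain is well-defined, since any Dyck path other than $Z_{n+1}$ has a peak at height $\ge 2$), but it yields only the parity, not the exact value. The upside of your argument is that it reuses the area/parity mechanism that the paper itself deploys in Lemma~\ref{lem:parity-A}; the upside of the paper's argument is that it is self-contained, shorter, and gives a closed formula. Either approach suffices for the purposes of Section~4.
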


\begin{proof} Let $T$ be the meander corresponding to the ordered pair $(Z_{n+1},L_{n+1})$. Note that the upper (lower, respectively) arch configuration of $T$ is the noncrossing matching $\{(1,2), (3,4)$, $\dots, (2n+1,2n+2)\}$ ($\{(1,2n+2), (2,2n+1), \dots, (n+1,n+2)\}$, respectively). Thus, $\traj(Z_{n+1})=1+\lfloor\frac{n}{2}\rfloor$. The result follows. 
\end{proof}

We describe a bijection $\phi_A$ that carries a Dyck path of size $n+1$ with all peaks at odd height to a Motzkin path of length $n$, which is due to J. Tirrell mentioned in \cite{Callan}. Let $\U,\HH,\DD$ denote an up step, a horizontal step and a down step, respectively.

Let $P=p_1p_2\cdots p_{2n+2}\in\A_{n+1}$. Removing $p_1$ and $p_{2n+2}$, the corresponding Motzkin path $\phi_A(P)=v_1\cdots v_n$ is defined by setting
\begin{equation} \label{eqn:V-step}
v_j=\begin{cases}
\U &\mbox{if $p_{2j}p_{2j+1}=\N\N$} \\
\HH &\mbox{if $p_{2j}p_{2j+1}=\E\N$} \\
\DD &\mbox{if $p_{2j}p_{2j+1}=\E\E$}
\end{cases}
\end{equation}
for all $j$. For example, the Dyck path $P\in\A_7$ on the left hand side of Figure \ref{fig:Dyck-odd-Motzkin} is carries to the Motzkin path $\phi_A(P)$ on the right hand side.

\begin{lem} \label{lem:parity-A} $\traj(P)\equiv\traj(Z_{n+1})\pmod 2$ if and only if $\phi_A(P)$ contains an even number of up steps.
\end{lem}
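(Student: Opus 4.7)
The strategy is to reduce the parity of $\traj(P)$ to the parity of $\area(P)$, and then to evaluate $\area(P)\bmod 2$ directly from the Motzkin image $\phi_A(P)$.

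\emph{Step 1 (parity reduction via area).} I would first show that the quantity $\traj(P)-\area(P)$ has constant parity on $\DDD_{n+1}$. A peak-to-valley swap (Lemma~\ref{lem:peak-valley}) removes exactly one unit square that lies strictly above $y=x$ and strictly below $P$, so $\area(P)$ decreases by $1$ while $\traj(P)$ changes by $\pm 1$; symmetrically a valley-to-peak swap (Lemma~\ref{lem:valley-peak}) increases $\area(P)$ by $1$ and changes $\traj(P)$ by $\pm 1$. Either way, $\traj-\area$ changes by an even integer. Since $\DDD_{n+1}$ is connected under peak/valley swaps, the parity of $\traj(P)-\area(P)$ is the same for every $P\in\DDD_{n+1}$; evaluating at $P=Z_{n+1}$ (where $\area(Z_{n+1})=0$) gives
\[
\traj(P)\equiv \area(P)+\traj(Z_{n+1})\pmod{2}\qquad\text{for all }P\in\DDD_{n+1}.
\]

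\emph{Step 2 (area modulo $2$ from the Motzkin path).} For $P\in\A_{n+1}$ with $\phi_A(P)=v_1v_2\cdots v_n$, let $m_j$ denote the Motzkin height after $v_j$, so $m_0=m_n=0$. Write
\[
\area(P)=\sum_{x=0}^{n}\max\!\bigl(0,\,h(x)-x-1\bigr),
\]
where $h(x)$ is the maximal $y$-coordinate attained by $P$ in column $x$. Using \eqref{eqn:V-step} and $p_1=\N$, I would locate the column and height of every east step of $P$: a pair $v_j=\HH$ contributes one east step giving $2m_{j-1}$ to $\area(P)$; a pair $v_j=\DD$ (which forces $m_{j-1}\ge 1$) contributes two east steps giving $2m_{j-1}+(2m_{j-1}-1)=4m_{j-1}-1$; and a pair $v_j=\U$ together with the terminal $p_{2n+2}=\E$ contributes $0$. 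Summing,
\[
\area(P)=\sum_{j:\,v_j=\HH}2m_{j-1}+\sum_{j:\,v_j=\DD}(4m_{j-1}-1)\equiv \#\DD\equiv \#\U\pmod{2},
\]
the last congruence because $\#\U=\#\DD$ in any Motzkin path.

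Combining the two steps, $\traj(P)\equiv\traj(Z_{n+1})\pmod{2}$ if and only if $\#\U$ is even, which is the desired equivalence. The main obstacle I expect is the bookkeeping in Step~2: reading off from \eqref{eqn:V-step} the column and height of each east step, and verifying that the only source of odd parity in $\area(P)$ is the $\DD$ (equivalently $\U$) pairs. The defining condition of $\A_{n+1}$, that every peak of $P$ lies at odd height, is essential here: it rules out the pattern $\N\E$ at any pair position $(p_{2j},p_{2j+1})$, so that the three-case classification $\N\N,\E\N,\E\E$ is exhaustive and $\phi_A(P)$ is well-defined.
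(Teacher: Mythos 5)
Your proposal is correct and follows essentially the same two-step strategy as the paper: first reduce the parity of $\traj(P)$ to the parity of $\area(P)$ via the peak/valley swap lemmas, then read off $\area(P)\bmod 2$ pair-by-pair from the Motzkin encoding $\phi_A(P)$. The only cosmetic difference is that you tally the area by columns (east steps), so the odd contributions come from the $\DD$-pairs rather than, as in the paper's row-by-row (north-step) count, from the $\U$-pairs, and you bridge the two with the trivial identity $\#\U=\#\DD$.
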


\begin{proof}
By Lemmas \ref{lem:peak-valley} and \ref{lem:valley-peak}, we have $\traj(P)\equiv\traj(Z_{n+1})\pmod 2$ if and only if $\area(P)\equiv\area(Z_{n+1})\pmod 2$.
Using (\ref{eqn:V-step}), we determine the contribution to $\area(P)$ by each step $v_j$ ($1\le j\le n$) as follows.
\begin{itemize}
\item $v_j=\U$. Then $p_{2j}p_{2j+1}=\N\N$, where $p_{2j}, p_{2j+1}$ are the left edges of two consecutive rows of unit squares above the line $y=x$. Thus, $v_j$ contributes an odd value to $\area(P)$.
\item $v_j=\HH$.  Then $p_{2j}p_{2j+1}=\E\N$, where $p_{2j+1}$ is the left edges of a row of an even number of unit squares above the line $y=x$. Thus, $v_j$ contributes an even value to $\area(P)$.
\item $v_j=\DD$.  Then $p_{2j}p_{2j+1}=\E\E$, where neither of $p_{2j},p_{2j+1}$ is the left edge of a row of unit squares. Thus, $v_j$ contributes nothing to $\area(P)$.
\end{itemize}
Since $\area(Z_{n+1})=0$ and the Motzkin path $\phi_A(Z_{n+1})$ contains no up step, we have $\area(P)\equiv\area(Z_{n+1})\pmod 2$ if and only if $\phi_A(P)$ contains an even number of up steps. The result follows. 
\end{proof}

Now, let $Z^*_n=\N(\N\E)^{n-1}\E$. Note that $Z^*_n$ is a Dyck path in $\B_n$ with $\area(Z^*_n)=n-1$. For example, the path represented by dashed lines on the left hand side of Figure \ref{fig:Dyck-even-Motzkin} is the Dyck path $Z^*_7$.

\begin{lem} \label{lem:K*_n} For $n\ge 2$, $\traj(Z^*_n)$ is odd if $n\equiv 0, 1\pmod 4$ and even if $n\equiv 2, 3\pmod 4$.
\end{lem}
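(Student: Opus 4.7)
The plan is to imitate the proof of Lemma~\ref{lem:Z_n+1}: read off the upper and lower arch configurations of the meander that corresponds, via Theorem~\ref{thm:Dyck-path-to-semimeander}(ii), to the ordered pair $(Z^*_n, L_n)$, and count its components directly. Since $Z^*_n=\N(\N\E)^{n-1}\E$, the matching pairs of $Z^*_n$ are the outermost pair $(p_1,p_{2n})$ together with the short peak pairs $(p_{2k},p_{2k+1})$ for $k=1,\dots,n-1$. Hence the upper arch configuration of the corresponding meander consists of the large arch $(1,2n)$ and the $n-1$ short arches $(2k,2k+1)$, while the lower arch configuration, coming from $L_n$, is the rainbow $\{(i,2n+1-i):1\le i\le n\}$.

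I would then trace the components of the superimposition. The top and bottom arches $(1,2n)$ close up at once into a $2$-cycle $\{1,2n\}$. For each $k$ with $1\le k<n/2$, starting at $2k$ one follows the upper arch to $2k+1$, the lower arch to $2n-2k$, the upper arch to $2n-2k+1$, and the lower arch back to $2k$, yielding a $4$-cycle. When $n$ is even, the middle index $k=n/2$ gives the degenerate case in which the short upper arch $(n,n+1)$ is matched by the innermost rainbow arch $(n,n+1)$, contributing an extra $2$-cycle. In both parities a routine count yields
\[
\traj(Z^*_n)=\left\lfloor \frac{n}{2}\right\rfloor+1.
\]

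The conclusion then follows from the parity of $\lfloor n/2\rfloor+1$, which is odd precisely when $n\equiv 0$ or $1\pmod 4$. No step is conceptually difficult; the only place care is needed is the split between $n$ even, where the central rainbow arch forms an isolated $2$-cycle with the middle short upper arch, and $n$ odd, where all non-extremal arches pair off cleanly into $4$-cycles. That bookkeeping is the main obstacle to a clean writeup.
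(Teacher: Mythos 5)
Your proposal is correct and follows essentially the same route as the paper: both read off the upper arch configuration $\{(1,2n)\}\cup\{(2,3),(4,5),\dots,(2n-2,2n-1)\}$ and the lower rainbow configuration of the meander corresponding to $(Z^*_n,L_n)$ and conclude $\traj(Z^*_n)=1+\lfloor n/2\rfloor$. Your explicit tracing of the $4$-cycles and the even/odd case split is just a more detailed writeup of the component count that the paper states without elaboration.
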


\begin{proof} Let $T$ be the meander corresponding to the ordered pair $(Z^*_n,L_n)$. Note that the upper (lower, respectively) arch configuration of $T$ is the noncrossing matching $\{(1,2n)\}\cup\{(2,3), (4,5), \dots, (2n-2,2n-1)\}$ ($\{(1,2n), (2,2n-1), \dots, (n,n+1)\}$, respectively). Thus, we have $\traj(Z^*_n)=1+\lfloor\frac{n}{2}\rfloor$. The result follows. 
\end{proof}

We describe a bijection $\phi_B$ that carries a Dyck path of size $n$ with all peaks at even height to a Riordan path of length $n$ \cite{Callan}.  

Let $P=p_1p_2\cdots p_{2n}\in\B_n$. The corresponding Riordan path $\phi_B(P)=u_1\cdots u_n$ is defined by setting
\begin{equation} \label{eqn:U-step}
u_j=\begin{cases}
\U &\mbox{if $p_{2j-1}p_{2j}=\N\N$} \\
\HH &\mbox{if $p_{2j-1}p_{2j}=\E\N$} \\
\DD &\mbox{if $p_{2j-1}p_{2j}=\E\E$.}
\end{cases}
\end{equation}
for all $j\in [n]$. For example, the path represented by solid lines on the left hand side of Figure \ref{fig:Dyck-even-Motzkin} is carries to the Riordan path $\phi_B(P)$ on the right hand side.

\begin{figure}[ht]
\begin{center}
\includegraphics[width=3.8in]{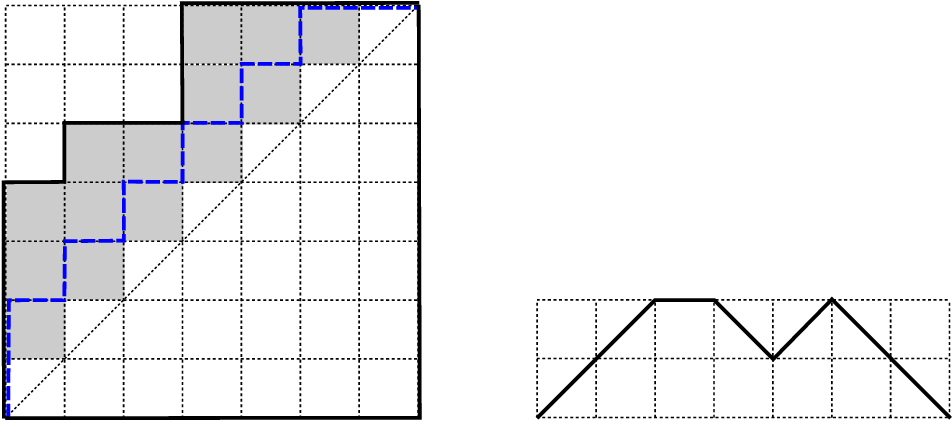}
\end{center}
\caption{\small A Dyck path $P\in\B_7$ and its corresponding Riordan path $\phi_B(P)$.} 
\label{fig:Dyck-even-Motzkin}
\end{figure}

\begin{lem} \label{lem:parity-B} $\traj(P)\equiv\traj(Z^*_n)\pmod 2$ if and only if $\phi_B(P)$ contains an odd number of up steps.
\end{lem}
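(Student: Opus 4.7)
The plan is to mirror the template of Lemma \ref{lem:parity-A}: first reduce the parity of $\traj(P)$ to that of $\area(P)$, then compute $\area(P)\bmod 2$ by tracking contributions of the paired steps $p_{2j-1}p_{2j}$ of $P$ under $\phi_B$. The first reduction uses the observation that replacing a peak $\N\E$ by a valley $\E\N$ (or vice versa) removes (resp.\ adds) exactly one unit square in the region enclosed by $P$ and $y=x$, so it changes $\area$ by $\pm 1$. Combined with Lemmas \ref{lem:peak-valley} and \ref{lem:valley-peak}, which say the same swap changes $\traj$ by $\pm 1$, the parities of $\traj$ and $\area$ change in lockstep along any peak/valley swap. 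Since any two paths in $\DDD_n$ are connected by such swaps, this yields
\[
\traj(P)\equiv\traj(Z^*_n)\pmod 2 \iff \area(P)\equiv\area(Z^*_n)\pmod 2,
\]
and since $\area(Z^*_n)=n-1$ it suffices to detect when $\area(P)\equiv n-1\pmod 2$.

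For the second step I would use the \emph{level} $\ell(e):=b-a$ of an east step $e$ of $P$ ending at $(a,b)$. Each column of the grid contains exactly one east step of $P$, and that column contributes precisely $\ell(e)$ unit squares to $\area(P)$, whence $\area(P)=\sum_e \ell(e)$ and the parity of $\area(P)$ equals the number of east steps of odd level. Since $\phi_B$ pairs $(p_{2j-1},p_{2j})$, after processing $p_1,\dots,p_{2j-2}$ the path sits at a point $(a,b)$ with $a+b=2j-2$, so $b-a$ is even. A short case check on $u_j\in\{\U,\HH,\DD\}$ then gives: when $u_j=\U$, the pair contains no east step; when $u_j=\HH$, the single east step has level $b-a-1$ (odd); when $u_j=\DD$, the two east steps have levels $b-a-1$ (odd) and $b-a-2$ (even). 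Thus each $\HH$ and each $\DD$ contributes exactly one odd-level east step, while each $\U$ contributes none.

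Summing over $j$ yields $\area(P)\equiv\#\HH+\#\DD = n-\#\U\pmod 2$, where $\#\U$ denotes the number of up steps of $\phi_B(P)$. Hence $\area(P)\equiv\area(Z^*_n)\pmod 2$ precisely when $\#\U$ is odd, which combined with the first step closes the chain of equivalences.

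The main subtlety is parity bookkeeping: one has to check the column-by-column formula for $\area$, and then verify that $a+b=2j-2$ (so $b-a$ is even) at the start of the $j$-th pair. This parity shift relative to the pairing $(p_{2j},p_{2j+1})$ used by $\phi_A$ (where $a+b$ is odd at the corresponding moment) is precisely what causes Lemma \ref{lem:parity-B} to involve the \emph{odd} up-step count while Lemma \ref{lem:parity-A} involved the \emph{even} one.
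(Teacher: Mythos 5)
Your proposal is correct and follows essentially the same route as the paper: both reduce the parity of $\traj(P)$ to that of $\area(P)$ via the peak/valley swap lemmas, and then compute $\area(P)\bmod 2$ locally from the step-pairs $(p_{2j-1},p_{2j})$ encoded by $\phi_B$. The only (immaterial) difference is that you tally area column-by-column via levels of east steps and compare $n-\#\U$ with $\area(Z^*_n)=n-1$, whereas the paper tallies the contribution of each $u_j$ to $\area(P)-\area(Z^*_n)$ row-by-row via north steps.
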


\begin{proof}
By Lemmas \ref{lem:peak-valley} and \ref{lem:valley-peak}, we have $\traj(P)\equiv\traj(Z^*_n)\pmod 2$ if and only if $\area(P)\equiv\area(Z^*_n)\pmod 2$. Using (\ref{eqn:U-step}), we determine the contribution to $\area(P)-\area(Z^*_n)$ by each step $u_j$ ($2\le j\le n$) as follows.
\begin{itemize}
\item $u_j=\U$. Then $p_{2j-1}p_{2j}=\N\N$, where $p_{2j}, p_{2j+1}$ are the left edges of two consecutive rows of unit squares above the path $Z^*_n$. Thus, $u_j$ contributes an odd value to $\area(P)-\area(Z^*_n)$.
\item $u_j=\HH$. Then $p_{2j-1}p_{2j}=\E\N$, where $p_{2j}$ is the left edges of a row of an even number of unit squares above the path $Z^*_n$. Thus, $u_j$ contributes an even value to $\area(P)-\area(Z^*_n)$.
\item $v_j=\DD$ is a down step. Then $p_{2j-1}p_{2j}=\E\E$, where neither of $p_{2j-1},p_{2j}$ is the left edge of a row of unit squares. Thus, $u_j$ contributes nothing to $\area(P)-\area(Z^*_n)$.
\end{itemize}
Since the Riordan path $\phi_B(Z^*_n)$ contains exactly one up step, we have $\area(P)\equiv\area(Z^*_n)\pmod 2$ if and only if $\phi_B(P)$ contains an odd number of up steps. The result follows. 
\end{proof}

\noindent
\emph{Proof of Theorem \ref{thm:up-to-a-sign}.} Let $P\in A_{n+1}$.
Suppose the Motzkin path $\phi_A(P)$ contains an even number of up steps. By Lemmas \ref{lem:Z_n+1} and \ref{lem:parity-A}, $\traj(P)$ is odd if $n\equiv 0, 1\pmod 4$, and even if $n\equiv 2, 3\pmod 4$. On the other hand, suppose the Motzkin path $\phi_A(P)$ contains an odd number of up steps, it follows that $\traj(P)$ is even if $n\equiv 0, 1\pmod 4$, and odd if $n\equiv 2, 3\pmod 4$. Thus, the expression on the right hand side of (\ref{eqn:A_n+1}) is the excess of the number of Dyck paths $P\in A_{n+1}$ with even $\traj(P)$ over the ones with odd $\traj(P)$, which equals the left hand side of (\ref{eqn:A_n+1}).  The assertion (i) of Theorem \ref{thm:up-to-a-sign} follows. 

Let $P\in B_n$.
Suppose the Riordan path $\phi_B(P)$ contains an odd number of up steps. By Lemmas \ref{lem:K*_n} and \ref{lem:parity-B}, $\traj(P)$ is odd if $n\equiv 0, 1\pmod 2$, and even if $n\equiv 2, 3\pmod 2$. On the other hand, suppose the Riordan path $\phi_B(P)$ contains an even number of up steps, it follows that $\traj(P)$ is even if $n\equiv 0, 1\pmod 4$, and odd if $n\equiv 2, 3\pmod 4$. Thus, the expression on the right hand side of (\ref{eqn:B_n}) is the excess of the number of Dyck paths $P\in B_{n}$ with even $\traj(P)$ over the ones with odd $\traj(P)$, which equals the left hand side of (\ref{eqn:B_n}).  The assertion (ii) of Theorem \ref{thm:up-to-a-sign} follows.
\qed


\end{document}